\newcommand{\bel}[1]{\begin{equation}\label{#1}}
\newcommand{\be}{\begin{equation}}
\newcommand{\ba}{\begin{eqnarray}}
\newcommand{\ea}{\end{eqnarray}}
\newcommand{\qe}{\end{equation}}
\newcommand{\R}{{\mathbb R}}
\newcommand{\N}{{\mathbb N}}
\newcommand{\Z}{{\mathbb Z}}
\newcommand{\HH}{\mathcal{H}}
\newcommand{\Hmm}[1]{\leavevmode{\marginpar{\tiny%
$\hbox to 0mm{\hspace*{-0.5mm}$\leftarrow$\hss}%
\vcenter{\vrule depth 0.1mm height 0.1mm width \the\marginparwidth}%
\hbox to
0mm{\hss$\rightarrow$\hspace*{-0.5mm}}$\\\relax\raggedright #1}}}
\newtheorem{theorem}{Theorem}[section]
\newtheorem{lemma}[theorem]{Lemma}
\newtheorem{corollary}[theorem]{Corollary}
\newtheorem{definition}[theorem]{Definition}
\newtheorem{prop}[theorem]{Proposition}
\newtheorem{claim}[theorem]{Claim}
\newcommand{\tm}{\begin{theorem}}
\newcommand{\tmd}{\end{theorem}}
\newcommand{\co}{\begin{corollary}}
\newcommand{\cod}{\end{corollary}}
\newcommand{\prp}{\begin{prop}}
\newcommand{\prpd}{\end{prop}}
\begin{document}

\title[Harmonic functions of polynomial growth on penny graphs]{Harmonic functions of polynomial growth on infinite penny graphs}

\author{Zunwu He}
\address{Zunwu He: School of Mathematical Sciences, Fudan University, Shanghai 200433, China}
\email{hzw@fudan.edu.cn}

\author{Bobo Hua}
\address{Bobo Hua: School of Mathematical Sciences, LMNS, Fudan University, Shanghai 200433, China; ; Shanghai Center for Mathematical Sciences, Jiangwan Campus, Fudan University, No. 2005 Songhu Road, Shanghai 200438, China.}
\email{bobohua@fudan.edu.cn}


\begin{abstract}


For an infinite penny graph, we study the finite-dimensional property for the space of harmonic functions, or ancient solutions of the heat equation, of polynomial growth.
We prove the asymptotically sharp dimensional estimate for the above spaces.

\end{abstract}
\maketitle

Mathematics Subject Classification 2010: 05C10, 31C05.


\par
\maketitle

\bigskip


\section{Introduction}
In geometric graph theory, penny graphs are contact graphs of unit circles in the plane. Finite penny graphs are extensively studied in the literature, e.g. \cite{Harborth74,Pollack85,Kupitz94,Pach95,Pach96,Csizmadia98,Pisanski00,Hlin01,Swanepoel09,Cerioli11,Eppstein18}. In this paper, we study discrete harmonic functions of polynomial growth on infinite penny graphs.

A penny graph is formed by arranging pennies in a non-overlapping way on the plane, making a vertex for each penny, and making an edge for each two pennies that touch.
Namely, let $\{C_i\}_{i=1}^\infty$ be a collection of circles of radius $\frac12,$ which are the boundaries of open disks $\{D_i\}_{i=1}^\infty$ in $\R^2$ (representing pennies),  such that $D_i\cap D_j=\emptyset,$ for any $i\neq j.$ Let $(V,E)$ be the contact graph of this configuration, i.e. $V=\{v_i\}_{i=1}^\infty$
where each $v_i$ represents the circle $C_i,$ and $\{v_i,v_j\}\in E$ if and only if $C_i$ and $C_j$ are tangent to each other. We call $(V,E)$ the \emph{penny graph of the configuration} $\{C_i\}_{i=1}^\infty.$ Note that it is a locally finite, simple, undirected graph. This graph has natural geometric realization in $\R^2,$ $$\phi:V\to \R^2,\quad \phi(v_i)=c_i,$$ where $c_i$ is the center of $C_i.$ For each edge $\{v_i,v_j\},$ we set $\phi(\{v_i,v_j\})$ to be the segment connecting $c_i$ and $c_j.$ This yields an embedding $\phi:(V,E)\to\R^2,$ and induces a CW complex structure, denote by $G=(V,E,F).$ Here $F$ is the set of faces, which corresponds to connected components of the complement of the embedding image of $(V,E).$  A \emph{penny graph} refers to the CW complex structure $G=(V,E,F)$ induced by a configuration of non-overlapping open disks of diameter $1.$  A penny graph $G=(V,E,F)$ is called connected if the 1-skeleton $(V,E)$ is connected. For any $\sigma\in F,$ we denote by $\deg(\sigma)$ the facial degree of $\sigma,$ and set $$D:=\sup_{\sigma\in F} \deg(\sigma).$$ We say that $G$ has (uniformly) \emph{bounded facial degree} if $D<\infty.$
In this paper, we only consider connected, infinite penny graphs with bounded facial degree, i.e. $D<\infty.$


For a graph $(V,E),$ the combinatorial Laplacian $\Delta$ is defined as, for any $f\in \R^V,$ the set of functions on $V,$
$$\Delta f(x):=\sum_{y} (f(y)-f(x)),\quad \forall x\in V,$$ where the summation is taken over neighbours of $x.$ A function $f$ is called harmonic if $\Delta f\equiv 0.$

We denote by $\Z^2$ the standard integer lattice graph in $\R^2.$ For any graph $(V,E),$ we denote by $d_{V}$ ($d$ in short) the combinatorial distance on the graph. The second author \cite{Hua20} proved that any infinite penny graph with bounded facial degree is quasi-isometric to $\Z^2,$ see Definition~\ref{def:quasi}, and hence the volume doubling property and the Poincar\'e inequality hold, see Definition~\ref{defi:vd}.

 For a graph $G=(V,E),$ fix $x_0\in V.$  For any $k\geq 0,$ we denote by $$\HH^k(G):=\left\{u: \Delta u\equiv 0, |u(x)|\leq C(1+d(x,x_0))^k, \forall x\in V\right\}$$ the space of harmonic functions of polynomial growth whose growth rate is at most $k.$
Note that the above space is independent of the choice of $x_0.$

On a Riemannian manifold with nonnegative Ricci curvature, the finite-dimensional property of the space of harmonic functions of polynomial growth was conjectured by Yau, and was confirmed by Colding-Minicozzi \cite{ColdingMinicozzi97}, see also \cite{ColdingMinicozzi98,ColdingMinicozzi98Weyl,Li97}. Following these arguments,  Delmotte \cite{Delm97} proved the finite-dimensional property for harmonic functions of polynomial growth on graphs under the assumptions of the volume doubling property and the Poincar\'e inequality. The following result was proved in \cite{Hua20}.

 \tm[Corollary~{3.10} in \cite{Hua20}]\label{thm:firstres} Let $G$ be an infinite penny graph with bounded facial degree. Then there exists $C(D)$ such that $$\dim \HH^k(G)\leq Ck^2,\quad \forall k\geq 1.$$
 \tmd

Note that for $\Z^2,$ $$\dim \HH^k(\Z^2)=2k+1,\quad k\in \N.$$ For a penny graph $G$ with bounded facial degree, it was conjectured in \cite[Conjecture~3.11]{Hua20} that $$\dim \HH^k(G)\leq Ck,\quad \forall k\geq 1.$$

For a planar graph with nonnegative combinatorial curvature, a rough dimensional estimate as in Theorem~\ref{thm:firstres} has been proved for the space of harmonic functions of polynomial growth, see \cite{HJLcrelle15}. Adopting an extension argument, the second author et al. \cite{HuaJosttrans15} obtained an asymptotically sharp dimensional estimate for that space. In this paper, we follow the proof strategy in \cite{HuaJosttrans15} to prove the above conjecture, which yields the asymptotically sharp estimate of $\dim \HH^k(G)$ for penny graphs.

\tm\label{thm:main1} Let $G$ be an infinite penny graph with bounded facial degree. Then there exists $C(D)$ such that $$\dim \HH^k(G)\leq Ck,\quad \forall k\geq 1.$$
\tmd

The proof strategy is as follows: The facial degree of each face of the penny graph $G$ is bounded above by $D.$ We first prove a crucial result, Theorem~\ref{lem:triangle}, that any face $\phi(\sigma)$ can be diagonally triangulated, see Definition~\ref{def:diagonal}, such that all angles in resulting triangles are uniformly bounded below by a constant depending on $D.$ Triangulating each face as above, we yield a triangulation $\mathcal{T}$ of $\R^2$ with each triangle in controllable size, i.e. bi-Lipschitz to a regular triangle of side-length one. We call $\mathcal{T}$ the \emph{associated triangulation} of $G.$ Given any $f\in \R^V,$ regarded as a function on the set of vertices of $\mathcal{T},$ we denote by $E(f)\in C(\R^2)$ the piecewise linear interpolation of $f$ to $\R^2.$ This yields an injective linear map: \begin{eqnarray*}&&E: \R^V\to C(\R^2), \\&&\quad \quad \ f\mapsto E(f).\end{eqnarray*} Note that the dimension of $\HH^k(G)$ equals the dimension of its image $E(\HH^k(G))$ in $C(\R^2).$ Instead of estimating the dimension of discrete harmonic functions on $G,$ we estimate that of extended functions on $\R^2.$ For any $f\in \HH^k(G),$ although $E(f)$ is not harmonic anymore, we prove that it satisfies the  mean value inequality in the large.
\tm\label{thm:mea}
Let $G=(V,E,F)$ be an infinite penny graph with bounded facial degree. Then there exist $R_1(D),C_1(D)$ such that for any $R\geq R_1,$ $p\in \R^2,$ any harmonic function $f$ on $G,$
\begin{equation}\label{eq:MVI}E(f)^2(p)\leq\frac{C_1}{r^2}\int_{D_{R}(p)}E(f)^2(y)dy,\end{equation} where $D_R(p)$ is the disk of radius $R$ centered at $p$ in $\R^2.$
\tmd
Using the mean value inequality and the homogeneous volume growth of $\R^2,$ we prove Theorem~\ref{thm:main1} by modifying the arguments in Colding-Minicozzi and Li \cite{ColdingMinicozzi97,ColdingMinicozzi98,ColdingMinicozzi98Weyl,Li97}. We remark that the extension method in this paper is much simpler than that in \cite{HuaJosttrans15}, which also applies for that case, i.e. for graphs with nonnegative combinatorial curvature.



The paper is organized as follows:
In next section, we recall some facts on penny graphs. In Section~\ref{sec:diag}, we prove the existence of associated triangulations of penny graphs, Lemma~\ref{lem:t1}. Section~\ref{sec:proof} is devoted to the proofs of Theorem~\ref{thm:mea} and Theorem~\ref{thm:main1}. In this paper, for the constants $C(D)$ depending only on $D,$ we simply write $C,$ and the constants $C$ may change from line to line.

\textbf{Acknowledgements.} B.H. is supported by NSFC, no.11831004 and no. 11926313.

\section{Preliminaries}
For a simple, undirected graph $(V,E),$ two vertices $x,y$ are called neighbors, denote by $x\sim y,$ if there is an edge connecting $x$ and $y.$ For any $x\in V,$ we denote by $\deg(x)$ the vertex degree of $x.$ The combinatorial distance $d$ on the graph is defined as, for any $x,y\in V$ and $x\neq y,$ $$d(x,y):=\inf\{n\in\N: \exists\{x_{i}\}_{i=1}^{n-1}\subset V, x\sim x_1\sim\cdots \sim x_{n-1}\sim y \}.$$  For any $x\in V$ and $R\geq 0,$ we denote by $$B_R(x):=\{y\in V: d(y,x)\leq R\}$$ the ball of radius $R$ centered at $x.$ We denote by $|\cdot |$ the counting measure on $V,$ i.e. for any $\Omega\subset V,$ $|\Omega|$ denotes the number of vertices in $\Omega.$
 The triple $(V,d,|\cdot|)$ is a discrete metric measure space. For any $\Omega\subset V,$ we denote by $$\delta\Omega:=\{y\in V\setminus \Omega: \exists x\in \Omega, y\sim x\}$$ the vertex boundary of $\Omega.$ We write $\overline{\Omega}:=\Omega\cup \delta\Omega.$ A function $f:\overline{\Omega}\to\R$ is called harmonic on $\Omega$ if $$\Delta f(x)=0,\ \forall x\in \Omega.$$

Let $G=(V,E,F)$ be a connected penny graph of a configuration of unit circles $\{C_i\}_{i=1}^\infty,$ with the embedding $\phi:(V,E)\to \R^2.$
One easily sees that
$$\deg(x)\leq 6,\quad \forall x\in V.$$
Note that for any vertices $x,y,$ $x\sim y$ if and only if $|\phi(x)-\phi(y)|=1,$ and $x\not\sim y$ if and only if $|\phi(x)-\phi(y)|>1.$ The embedding map $\phi$ induces the embedding of faces. We denote by $\phi(\sigma)$ the embedding image of $\sigma \in F$. Note that $\phi(\sigma)$ is a polygonal domain, which is homeomorphic to an open disk since $(V,E)$ is connected. The boundary of $\phi(\sigma)$ is a piecewise linear curve consisting of images of edges, which may have self-intersection. The facial degree of $\sigma,$ $\deg(\sigma)\in \N\cup\{\infty\},$ is the number of interior angles (or corners) of the polygonal domain $\phi(\sigma).$ 

From now on, we don't distinguish $G=(V,E,F)$ with its geometric realization $(\phi(V),\phi(E),\phi(F)),$
i.e. we identify $v\in V$ (resp. $e\in E,\sigma\in F$) with $\phi(v)$ (resp. $\phi(e),\phi(\sigma)$) in $\R^2.$

We recall the definition of the quasi-isometry between metric spaces.
\begin{definition}[\cite{BBI01}]\label{def:quasi} We say two metric spaces $(X,d_X)$ and $(Y,d_Y)$ are quasi-isometric if there exist a map $\psi:X\to Y,$ $a\geq 1,$ and $C>0$ such that
\begin{enumerate}[(a)]
\item for any $x_1,x_2\in X,$
$$a^{-1} d_X(x_1,x_2)-C \leq d_Y(\psi(x_1),\psi(x_2))\leq a d_X(x_1,x_2)+C,$$ and
\item for any $y\in Y,$ there exists $x\in X$ such that $d_Y(y,\psi(x))<C.$
\end{enumerate} 
\end{definition}

The following result was proved in \cite{Hua20}.
\begin{theorem}[Theorem~{1.1} and Theorem~{3.2} in \cite{Hua20}]\label{thm:quasi} Let $G=(V,E,F)$ be an infinite penny graph with bounded facial degree. Then for any $x,y\in V,$ $$\frac{d(x,y)}{2D}\leq |\phi(x)\phi(y)|\leq d(x,y).$$ Moreover, the metric space $(V,d_V)$ and $(\Z^2,d_{\Z^2})$ are quasi-isometric.
\end{theorem}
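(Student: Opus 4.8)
The statement has two ingredients: the bi‑Lipschitz‑type comparison $\tfrac{d(x,y)}{2D}\le|\phi(x)\phi(y)|\le d(x,y)$ and the quasi‑isometry with $\Z^2$. The upper bound is immediate: pick a combinatorial geodesic $x=x_0\sim x_1\sim\cdots\sim x_n=y$ with $n=d(x,y)$; each edge joins two tangent unit circles, so $|\phi(x_{i-1})\phi(x_i)|=1$, and the triangle inequality in $\R^2$ gives $|\phi(x)\phi(y)|\le\sum_{i}|\phi(x_{i-1})\phi(x_i)|=n$.

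For the lower bound — equivalently $d(x,y)\le 2D\,|\phi(x)\phi(y)|$ — my plan is to follow the straight segment $\gamma=[\phi(x),\phi(y)]$ through the penny graph and turn it into a combinatorial path. The case $x=y$ being trivial, assume $x\ne y$ and set $\ell:=|\phi(x)\phi(y)|\ge 1$ (the open disks are disjoint). After an arbitrarily small perturbation of the interior of $\gamma$ one may assume $\gamma$ meets the $1$‑skeleton only transversally and only in edge interiors; then $\gamma$ passes through a finite list of faces $\sigma_0,\dots,\sigma_m$, with $\phi(x)$ a corner of $\sigma_0$, $\phi(y)$ a corner of $\sigma_m$, and $\sigma_{i-1},\sigma_i$ meeting along the edge $e_i$ that $\gamma$ crosses at a point $p_i\in\gamma$. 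Choose an endpoint $z_i$ of $e_i$ for $i=1,\dots,m$, and put $z_0=x$, $z_{m+1}=y$; since $z_i,z_{i+1}$ both lie on the boundary walk of $\sigma_i$, which has at most $D$ unit edges, they are joined inside $\partial\sigma_i$ by a walk of at most $\lfloor D/2\rfloor$ edges, whence $d(x,y)\le\sum_{i=0}^m d(z_i,z_{i+1})$. The two points needed to make the right‑hand side $\le 2D\ell$ are: (i) a packing estimate showing that the number of faces met, equivalently of crossed edges, is $O(\ell)$ — because every $z_i$ satisfies $|\phi(z_i)-p_i|\le1$ with $p_i\in\gamma$, so the disjoint radius‑$\tfrac12$ disks centred at the $\phi(z_i)$ all lie in the $\tfrac32$‑neighbourhood of $\gamma$, of area $3\ell+\tfrac{9\pi}{4}$; and (ii) the observation that whenever the edge by which $\gamma$ enters a face $\sigma_i$ and the edge by which it leaves share a common vertex $v$, one may take $z_i=z_{i+1}=v$, contributing $0$, so that only the genuinely crossed faces — whose crossing points $p_i,p_{i+1}$ are uniformly separated along $\gamma$ — carry the $\lfloor D/2\rfloor$ cost.

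The hard part of the lower bound is the careful bookkeeping behind (i) and (ii): one must control how densely edges of the penny graph can cross a short subsegment of $\gamma$, and how to route around a face of degree $\le D$ without paying more than $\tfrac{D}{2}$ per genuine crossing, so that the additive constants (coming, e.g., from the end faces $\sigma_0,\sigma_m$) are absorbed using $\ell\ge 1$; this is where the sharp constant $2D$ is extracted. Everything else is soft.

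Once the comparison is in hand, the quasi‑isometry follows quickly. It says precisely that $\phi:(V,d_V)\to(\R^2,|\cdot|)$ satisfies condition (a) of Definition~\ref{def:quasi} (with $a=2D$). For condition (b), coarse surjectivity: given $p\in\R^2$, if $p$ lies on the $1$‑skeleton it is within $\tfrac12$ of a vertex, and otherwise $p$ lies in a face $\sigma$; since $\sigma$ is contained in the convex hull of its $\le D$ corners, and those corners lie on a closed walk of $\le D$ unit edges and hence are pairwise within Euclidean distance $\le D/2$, we get that $p$ is within $D/2$ of every corner of $\sigma$. Thus $\phi(V)$ is $(D/2)$‑dense, so $\phi$ is a quasi‑isometry from $(V,d_V)$ onto $(\R^2,|\cdot|)$. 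Finally $(\R^2,|\cdot|)$ is quasi‑isometric to $(\Z^2,d_{\Z^2})$ via the inclusion — it is $1$‑dense, and the graph metric $d_{\Z^2}$ and the Euclidean metric $|\cdot|$ are bi‑Lipschitz equivalent on $\Z^2$ — and composing the two quasi‑isometries proves that $(V,d_V)$ and $(\Z^2,d_{\Z^2})$ are quasi‑isometric.
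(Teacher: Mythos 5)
The paper does not actually prove this theorem: it is imported from \cite{Hua20} (cited there as Theorems~1.1 and 3.2), and the present paper only quotes it. So there is no in-paper argument to compare against; I assess your sketch on its own terms.

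Your upper bound $|\phi(x)\phi(y)|\le d(x,y)$ is correct, and so is the passage from the two-sided estimate to the quasi-isometry with $\Z^2$: $\phi$ is coarsely surjective onto $\R^2$ because, as you say, a face $\sigma$ of degree $\le D$ is contained in the convex hull of its corners, which are pairwise within $D/2$, so every point of the convex hull is within $D/2$ of each corner; and $\R^2$ is quasi-isometric to $\Z^2$. (You do implicitly need that every face is bounded; this does follow from $D<\infty$ for an infinite connected penny graph, but it is worth flagging.)

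The lower bound, however, is only a plan, and two real gaps remain. First, your packing estimate bounds the number of \emph{distinct vertices} $\phi(z_i)$ within distance $1$ of $\gamma$ (indeed their $\tfrac12$-disks fit in a strip of area $3\ell+\tfrac{9\pi}{4}$, giving $\lesssim \tfrac{12}{\pi}\ell$ vertices), but several crossings can share an endpoint, so this does not by itself bound the number $m$ of crossed edges. You need the additional input that $\deg(v)\le 6$ in a penny graph, which converts the vertex count into an edge count and does give $m=O(\ell)$; that step is missing. Second, even granting $m=O(\ell)$, each crossing is charged $\le\lfloor D/2\rfloor$, and the straightforward tally gives $d(x,y)\le C\,D\,\ell$ with a universal $C$ visibly larger than $2$, not the stated $2D$. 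Your observation (ii) — pay nothing when the entering and leaving edges of $\sigma_i$ share a vertex, pay $\lfloor D/2\rfloor$ only when they don't, using that disjoint edges of a penny graph are uniformly separated (in fact $\ge\sqrt3/2$, a fact you assert but must prove from the pairwise distance $\ge1$ of distinct vertices) — is the right kind of refinement, but as stated it does not close the bookkeeping either: two consecutive ``free'' transitions $e_i\cap e_{i+1}=\{v\}$ and $e_{i+1}\cap e_{i+2}=\{v'\}$ may have $v\ne v'$, forcing a jump of $1$, and there can be $\Theta(\ell)$ such jumps. So what your argument delivers, once these details are filled in, is a bound $d(x,y)\le C(D)\,|\phi(x)\phi(y)|$ with an unspecified $C(D)$ — entirely sufficient for the quasi-isometry statement and for everything this paper uses, but not a proof of the precise constant $2D$ in the displayed inequality. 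If you intend to claim that constant, you need a genuinely sharper accounting than the one sketched.
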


\begin{definition}\label{defi:vd}
For a graph $(V,E),$ we say that it satisfies
 the \emph{volume doubling property} if there exists $C_1$ such that \begin{equation}\label{eq:1}|B_{2R}(x)|\leq C_1|B_{R}(x)|, \quad \forall x\in V, R>0.\end{equation}
    We say that a graph $G=(V,E)$ satisfies the \emph{Poincar\'e inequality} if there exists a constant $C_2$ such that  for any $x\in V, R>0,$ and any function $f:B_{2R}(x)\to \R,$
  \begin{equation}\label{eq:poincare}\sum_{y\in B_R(x)}|f(y)-f_R|^2\leq C_2R^2\sum_{w,z\in B_{2R}(x)}|f(w)-f(z)|^2,\end{equation} where $f_R=\frac{1}{|B_R(x)|}\sum_{B_R(x)}f.$


\end{definition}
The next result was proven in \cite{Hua20}.
\tm[Theorem~{1.2} in \cite{Hua20}]Let $G=(V,E,F)$ be an infinite penny graph with bounded facial degree. Then the volume doubling property and the Poincar\'e inequality hold.
\tmd




\section{Diagonal triangulations of penny graphs}\label{sec:diag}
In this section, we prove the existence of proper triangulations of $\R^2$ associated to penny graphs.
\begin{definition}\label{def:diagonal}
By an \emph{$n$-polygon} $Q$ in the plane we mean a bounded closed subset in the plane such that the boundary $\partial Q$ consists of $n$ segments, there is a surjective immersion from the circle $S^1$ to $\partial Q$, and the preimage of any interior point of every segment is exactly one point. We call the segments the \emph{edges} of the polygon $Q.$ The endpoints of segments have $n$ preimages. We label the endpoints of segments according to their preimages, and call the labels the \emph{vertices} of $Q,$ which are not necessarily distinct. Hence $Q$ has $n$ edges and $n$ vertices, and the vertices can be arranged in clockwise way.

Let $V,E$ be the set of vertices and edges, respectively. For two vertices $v$, $w$, the segment $[v,w]\subset Q$ is not an edge of $Q$ with $[v,w]\cap V=\{v,w\}$, we call it a \emph{diagonal segment}. A triangulation of $n$-polygon is called a \emph{diagonal triangulation} if edges of the resulted triangles consist of the edges and diagonal segments of the polygon.

\end{definition}
Given $m$ points $x_1,x_2,\cdots,x_m\in \R^2$, we denote by
$$[x_1,x_2,\cdots,x_m]$$
the piecewise geodesic curve, i.e $[x_1,x_2,\cdots,x_m]=\bigcup\limits_{i=1}^{m-1}[x_i,x_{i+1}]$. We call $[x_1,x_2,\cdots,x_m]$ is simple curve if it has no self-intersection points.

We say $[x_1,x_m]$ is a \emph{$\epsilon$-follow} of $[x_1,x_2,\cdots,x_m]$, if there are

$$\hat x_1=x_1,\hat x_2,\cdots,\hat x_m=x_m$$
in the segment $[x_1,x_m]$ such that

$$d(x_1,\hat x_1)\leq \epsilon,d(x_2,\hat x_2)\leq \epsilon,\cdots,d(x_m,\hat x_m)\leq \epsilon.$$

For any $x,y$ in the plane, we denote by $l_{x,y}$ the line containing $x,y.$ $l_{x,y}$ has two ends, and one can equip it with an orientation from one end $-\infty$ to $x$, then to $y$ and finally to the other end $+\infty$. We always regard $l_{x,y}$ as such an oriented line.

Let $l_1,l_2$ be two lines in the plane. If the angle between $l_1$ and $l_2$ is less than or equal to $\theta\in [0,\pi/2]$, we say $l_1$ and $l_2$ are $\theta$-\emph{parallel}.

\begin{lemma}\label{diagonal triangulation}
Let $Q$ be an arbitrary $n$-polygon in the plane. Then there exists a diagonal triangulation of $Q$.
\end{lemma}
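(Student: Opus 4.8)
The plan is to prove the statement by induction on $n$, the number of vertices/edges of the polygon $Q$. The base case $n=3$ is trivial: a $3$-polygon is already a triangle, and the empty set of diagonal segments gives the required diagonal triangulation. For the inductive step, assuming the result for all $k$-polygons with $3\leq k<n$, I would like to find a single diagonal segment $[v,w]$ that splits $Q$ into two polygons each with fewer than $n$ vertices, and then apply the inductive hypothesis to each piece. The subtlety here — and the reason Definition~\ref{def:diagonal} allows the boundary to be merely an immersed (not embedded) image of $S^1$, with possibly repeated vertices — is that $Q$ need not be a convex, or even simple, polygon; it is exactly the kind of polygonal domain $\phi(\sigma)$ arising as a face of a penny graph, whose boundary may have self-intersections.

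First I would set up the "ear-finding" argument adapted to this immersed setting. Fix the immersion $\gamma\colon S^1\to\partial Q$ and consider consecutive vertices $u,v,w$ along $\gamma$ (so $[u,v]$ and $[v,w]$ are consecutive edges). The candidate diagonal is $[u,w]$. I would call the triple an \emph{ear} if (i) $[u,w]\subset Q$, (ii) $[u,w]\cap V=\{u,w\}$, i.e. no other vertex lies on the segment, and (iii) cutting along $[u,w]$ separates the small triangle $\triangle uvw$ from the rest, leaving a valid $(n-1)$-polygon. The key step is to show such an ear always exists. The standard approach: among all vertices, pick a vertex $v$ at which $Q$ is "locally convex" and which is extremal in a suitable sense — e.g. take a generic direction and let $v$ be the vertex where a supporting line first touches $Q$; then the interior angle at $v$ is less than $\pi$ (measuring the angle on the side where $Q$ locally sits), so the segment $[u,w]$ between its neighbors enters $Q$ locally near $v$. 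If no other vertex and no other piece of $\partial Q$ obstructs $[u,w]$, we have our ear. If the open triangle $\triangle uvw$ does contain boundary material, one instead picks the vertex of $Q$ inside $\overline{\triangle uvw}$ that is farthest from the line $l_{u,w}$ (or, if several such, one chosen generically) and uses $[v, \cdot]$ as the diagonal — this segment lies in $Q$ and meets $V$ only at its endpoints, and it still splits $Q$ into two polygons with strictly fewer vertices than $n$. This dichotomy is the heart of the proof.

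The main obstacle I anticipate is precisely the bookkeeping required because $Q$ is only immersed, not embedded: "interior of $Q$," "the side on which $Q$ locally lies," and "the triangle $\triangle uvw$ lies in $Q$" all need to be phrased via the local sheet structure of the immersion rather than via the plain planar region, and one must check that cutting along a diagonal produces two objects that again satisfy Definition~\ref{def:diagonal} (surjective immersion of $S^1$ onto the new boundary, each interior edge point with a single preimage). Once the existence of an appropriate diagonal is established, the rest is routine: cut $Q$ along it into $Q_1$ and $Q_2$, each an $n_i$-polygon with $n_1,n_2<n$ and $n_1+n_2 = n+2$; apply the inductive hypothesis to triangulate each diagonally; the union is a diagonal triangulation of $Q$, since every edge of every resulting triangle is an edge of $Q$, the new cut (a diagonal segment of $Q$), or a diagonal segment of $Q_i$ — and a diagonal segment of $Q_i$ is easily checked to be either an edge or a diagonal segment of $Q$. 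This completes the induction and hence the proof.
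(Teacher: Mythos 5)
Your proposal follows essentially the same ear-clipping induction as the paper: pick a vertex $v$ with interior angle less than $\pi$, use $[u,w]$ as the diagonal if it lies in $Q$, and otherwise cut from $v$ to a vertex of $Q$ inside $\bigtriangleup uvw$. The only differences are cosmetic: the paper finds $v$ via Gauss--Bonnet rather than a supporting line, selects the blocking vertex nearest to $v$ rather than farthest from $l_{u,w}$, and explicitly splits off the simple-boundary and coincident-vertex cases, while you subsume these under general remarks about the immersed setting.
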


\begin{proof}
We argue in two possible cases.

Case1: The boundary of $n$-polygon is a simple closed curve. The existence of diagonal triangulation is well-known in the literature.

Case2: The boundary of $n$-polygon contains some loops.
We show the existence by an induction on $n$.

One easily sees that the sum of interior angles of $Q$ is $(n-2)\pi$ by Gauss-Bonnet formula, so one can choose a vertex $v$ at which the interior angle is less than $\pi$.

If this vertex does not coincide with other vertex, there are two adjacent vertices $w$, $u$. If $[w,u]$ is a diagonal segment of the polygon, this induces two sub-polygons with less vertices, and it follows from the induction.

If $[w,u]$ does not lie in the polygon, then there are some vertices contained in the interior of the triangle $\bigtriangleup wvu$. Otherwise, the interior angle at $v$ is at least $\pi$ which violates the choice of $v$. Then one can choose the nearest point $v'$ (with respect to $v$) among these vertices which are distinct to $v$ in $\bigtriangleup wvu$. Thus the segment $[v',v]\subset Q$ is a diagonal segment. It follows by induction.

If $v$ coincides with some other vertices, we can take two adjacent vertices $w$, $u$ such that there do not exist any other edges containing $v$ except edges $[w,v]$, $[u,v]$ in $\bigtriangleup wvu$. Then the proof is the same as above.

\end{proof}

\begin{theorem}\label{lem:triangle}
Let $Q$ be a $n$-polygon in a plane with $n$ vertices (not necessarily distinct), such that each edge is of length one and the distance between two distinct vertices is at least one. Then there exists a diagonal triangulation of $Q$, such that there is a positive lower bound depending only on $n$ for the interior angles of triangles.
\end{theorem}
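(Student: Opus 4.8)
The plan is to combine the purely combinatorial Lemma~\ref{diagonal triangulation} with a quantitative, geometric argument controlling the angles. By Lemma~\ref{diagonal triangulation} a diagonal triangulation of $Q$ exists; the issue is to choose one whose triangles are not too ``thin''. Since every triangle in such a triangulation has vertices among the (at most $n$) vertices of $Q$, and any two distinct vertices are at distance in the interval $[1,\diam(Q)]$, where $\diam(Q)\le n$ (the boundary is an $n$-gon with unit edges, so its total length is $n$), each triangle $T$ in any diagonal triangulation has all side lengths in $[1,n]$ — provided $T$ is non-degenerate, i.e. its three vertex-labels are pairwise at positive distance. Two distinct vertex-labels are at distance $\ge 1$ by hypothesis; the only way a ``triangle'' of the triangulation could degenerate is if two of its corners carry the \emph{same} label (coinciding vertices of $Q$) or the three centers are collinear. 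The smallest angle of a genuine triangle with all sides in $[1,n]$ is bounded below by $\arcsin(1/n)$ (opposite the shortest side, longest side at most $n$), so once we produce a diagonal triangulation all of whose triangles are honestly non-degenerate, we are done with $c(n)=\arcsin(1/n)$ or a similar explicit bound.

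The first substantive step, then, is to upgrade Lemma~\ref{diagonal triangulation} so that no triangle of the chosen triangulation is degenerate. I would revisit the induction in the proof of Lemma~\ref{diagonal triangulation}: at each stage one picks a vertex $v$ with interior angle $<\pi$ and cuts off either a single triangle $\triangle wvu$ (when $[w,u]$ is a diagonal) or a triangle $\triangle v'v\cdot$ using a nearest interior vertex $v'$. I would argue that in the ``simple closed curve'' Case~1 this procedure can always be run so that the ear $\triangle wvu$ one cuts is non-degenerate: the three points $w,v,u$ are pairwise distinct labels when the curve is simple, and if they happened to be collinear one simply perturbs the choice (there is always an ear with positive area, else the polygon has zero area). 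In Case~2, when some vertices coincide, the key observation is that a ``flat'' corner — where the cut would produce a collinear or coincident triple — corresponds to the boundary curve doubling back on a segment; one can then either remove that spur first (it contributes no area) or choose a different admissible vertex $v$. The bookkeeping here is the main obstacle: one must show that at every inductive step \emph{some} legal, non-degenerate ear or diagonal cut is available, and that coinciding vertices never force an all-degenerate situation. A clean way to handle this is to first reduce $Q$ to its ``essential'' part by contracting away maximal spurs (paths traversed forward and back), triangulate that with genuine triangles, and observe the removed spurs need no triangles at all.

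The last step is purely arithmetic: in any resulting non-degenerate triangle all three sides lie in $[1,n]$, so by the law of sines the smallest angle $\alpha$ satisfies $\sin\alpha = \text{(shortest side)}/(2R_{\mathrm{circ}}) \ge 1/(2\cdot \tfrac{n}{2\sin(\pi/3)})$ — or more simply, opposite the side of length $\ge 1$ with the other two sides $\le n$ we get $\sin\alpha \ge \tfrac{1}{n}\cdot\tfrac{\sqrt3}{2}$ after a short estimate — yielding a lower bound $c(n)>0$ depending only on $n$. Since this $c(n)$ is a common lower bound for all angles of all triangles (the number of triangles is $n-2$, finite), the theorem follows. I expect roughly $95\%$ of the work to be in the combinatorial Step~2 (guaranteeing non-degeneracy throughout the induction, carefully treating coinciding vertices); the geometric estimate in Step~3 and the reduction in Step~1 are routine.
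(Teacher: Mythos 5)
Your proposal has a genuine gap in the geometric estimate that you treat as routine. The claim that a triangle with all side lengths in $[1,n]$ automatically has a smallest angle bounded below (by $\arcsin(1/n)$ or by $\tfrac{\sqrt3}{2n}$) is false. Take a triangle with side lengths $1,\,1,\,2-\varepsilon$: all three sides lie in $[1,2]$, yet as $\varepsilon\to 0$ the triangle flattens, the largest angle tends to $\pi$, and the two small angles tend to $0$. The law-of-sines estimate $\sin\alpha = a/(2R_{\mathrm{circ}})$ does not help because the circumradius $R_{\mathrm{circ}}$ is unbounded in exactly this degenerating family; your bound implicitly assumes the largest angle stays away from $\pi$ (so that $\sin(\text{largest angle})$ stays away from $0$), which there is no reason to assume. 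So even a diagonal triangulation all of whose triangles have three pairwise-distinct vertex labels can contain arbitrarily thin slivers, and the whole strategy of ``fix the combinatorics so that no triangle is degenerate, then read off the angle bound from the side-length constraints'' collapses at the last step.

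This is why the paper does something quite different: it argues by contradiction and compactness. Assuming a sequence of $n$-polygons $P_k$ in which every diagonal triangulation produces some angle less than $1/k$, one passes (after normalizing and taking subsequences) to a limit polygon $P$, which is still an $n$-polygon of the stated type. One then fixes a diagonal triangulation $\Sigma$ of $P$ and proves the key Lemma~\ref{main lemma}: each diagonal $[v_i,v_j]$ of $\Sigma$ lifts, for large $k$, to an actual diagonal segment $[v_i^k,v_j^k]\subset P_k$, with $[v_i^k,v_j^k]\to[v_i,v_j]$. This yields, for large $k$, a diagonal triangulation of $P_k$ whose triangles converge to the nondegenerate triangles of $\Sigma$, hence have angles bounded away from $0$ — contradicting the choice of $P_k$. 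The hard part of that argument is precisely the one your proposal tries to avoid: handling the degenerate limit triangles (those collapsing to segments), which is done through the careful analysis of $\varepsilon$-follows and $\theta$-parallel lines in Cases 1, 2 and the induction on the number of degenerate limit points. To repair your approach you would need a genuinely new argument ensuring that among all diagonal triangulations of $Q$ one can always pick one with no thin triangles, and that the thinness threshold depends only on $n$; the side-length bounds alone do not give this, and establishing it in a quantitative, non-compactness way seems at least as hard as the paper's proof.
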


\begin{proof}
Note that $n=3$ is trivial. We argue by contradiction for $n\geq 4$.  Assume it is not true, then there exists a sequence of $n$-polygons $P_k$ with some diagonal triangulations, such that there is a sequence of interior angles of triangles tending to zero. Since the boundary (relative to the plane) of such $n$-polygon is
a join of a simple closed curve with at most $n$ loops, the combinatorial structures and hence topological structures are finite. We may assume the boundaries of $\{P_i\}$, $\partial P_i$ are homeomorphic to each other and have the same combinatorial structure.
We proceed in the following steps.

\textbf{Step 1. The existence of the diagonal triangulation of such an $n$-polygon.}

This follows from Lemma \ref{diagonal triangulation}.

\textbf{Step 2. Construct the limit polygon.}


Let $v_1^k$, $v_2^k$, $\cdots $, $v_n^k$ be the vertices of the polygon $P_k$ arranged clockwise on the boundary, where $k=1$, $2$, $3$, $\cdots$. By translations we may assume $v^1_1=v^2_1=\cdots$ and
fix $v^k_1$, denoted by $o$. Therefore $P_k\subset B_n(o)$ for any $k$. We may assume $v^k_i\rightarrow v_i$ as $k\rightarrow \infty$ up to a subsequence, and $v^k_1=o=v_1$ for $1\leq i\leq n$.

Thanks to the existence of diagonal triangulation, we have finitely many diagonal triangulations for $P_k$. We say a diagonal triangulation $\Theta$ of $P_k$ is equivalent to a diagonal triangulation $\Lambda$ of $P_l$, if any resulted triangle of $\Theta$ contains vertices $v^k_i,v^k_j,v^k_s$ if and only if some resulted triangle of $\Lambda$ contains vertices $v^l_i,v^l_j,v^l_s$. This is indeed an equivalence relation.

One can denote by $\mathcal{T}_k=(V^k, E^k, F^k)$ for each polygon $P_k$, where $V^k$, $E^k$ and $F^k$ are the set of vertices, edges and diagonal segments, triangles of $P_k$, respectively. Therefore we may assume all polygons $P_k$ share one diagonal triangulation class $\Theta$ up to a subsequence.

Now one can construct a limit for these polygons $P_k$ as follows. For any triangle $\bigtriangleup v^k_iv^k_jv^k_m\in F^k$ of $P_k$ and any point $x^k\in \bigtriangleup v^k_iv^k_jv^k_m$, we have the unique coordinate expression in terms of $v^k_i,v^k_j,v^k_m$. This reads as
\begin{align*}
x^k=sv^k_i+tv^k_j+\mu v^k_m, \ s+t+\mu =1,\ s,\ t,\ \mu\in [0,1].
\end{align*}

Thus $x^k\rightarrow x\triangleq sv_i+tv_j+\mu v_m,k\rightarrow \infty$. By the above discussion, this gives a limit of polygons $P_k$, denoted by $P$. We also have

\begin{align*}
\lim\limits_{k\rightarrow \infty}\bigtriangleup v^k_iv^k_jv^k_m=\bigtriangleup v_iv_jv_m.
\end{align*}

If $\bigtriangleup v_iv_jv_m$ is non-degenerate, then it is obvious that the above limit gives a homeomorphism $\bigtriangleup v^k_iv^k_jv^k_m\approx \bigtriangleup v_iv_jv_m$ for any $k$. If $\bigtriangleup v_iv_jv_m$ is degenerate, we may assume $d(v_i,v_j)+d(v_j,v_m)=d(v_i,v_m)$. Hence a simple calculation shows that, $[v^k_i,v^k_m]$ is a diagonal segment and the degenerate triangle $\bigtriangleup v_iv_jv_m$ is obtained from $P_k$ by compressing the triangle $\bigtriangleup v^k_iv^k_jv^k_m$ to the diagonal segment $[v^k_i,v^k_m]$ in the topological view.

The polygon $P$ is topologically obtained from $P_k$ by compressing finitely many triangles in $F^k$ to diagonal segments. We describe a basic combinatorial and topological property in the following.
\begin{claim}
$ P=\lim\limits_{k\rightarrow \infty} P_k\approx P_k$. Moreover, $P$ is a $n$-polygon with $n$ vertices $v_1,v_2,\cdots,v_n$ and $n$ edges $[v_j,v_{j+1}]$ for $j=1,2,\cdots,n$ with setting $v_{n+1}=v_1$.
\end{claim}
\begin{proof}
By the above discussion, it suffices to show $\hat P_k\approx P_k$, $\hat P_k$ is a $n$-polygon with $n$ vertices $v_1,v_2,\cdots,v_n$ and $n$ edges $[v_j,v_{j+1}]$ for $j=1,2,\cdots,n-1$, where $\hat P_k$ is topologically obtained by compressing one triangle $\sigma\in F^k$ to a diagonal segment. Furthermore, it suffices to show this for the local neighbourhood $N(\sigma)$ of $\sigma$, where $N(\sigma)$ is the union of triangles having a diagonal segment of $\sigma$ in $F^k$. Denote by $\hat N(\sigma)$ the space obtained from $N(\sigma)$ by compressing $\sigma$ to a diagonal segment. There are three possible cases as follows.

\begin{enumerate}%
\item If $\sigma$ has three diagonal segments, then $N(\sigma)$ consists of four triangles in $F^k$. 
\item If $\sigma$ has one edge and two diagonal segments, then $N(\sigma)$ consists of three triangles in $F^k$. 
\item If $\sigma$ has two edges and one diagonal segment, then $N(\sigma)$ consists of two triangles in $F^k$.

\end{enumerate}
One can easily verify that the claim holds for $\hat N(\sigma)$ and $N(\sigma)$ in these cases.

Thus we conclude the above claim.
\end{proof}


\textbf{Step 3. Recover a uniform diagonal triangulation from the limit polygon.}

Take a diagonal triangulation $\Sigma$ of the limit polygon $P$, we will establish the following main lemma.
\begin{lemma}\label{main lemma}
For sufficiently large $k$, the segment $[v_i^k,v_j^k]\subset P_k$ whenever $[v_i,v_j]$ is a diagonal segement of the triangulation $\Sigma$ of $P$. Moreover, $[v_i^k,v_j^k]\rightarrow [v_i,v_j]$ as $k\rightarrow \infty$. In particular, $[v^k_i,v^k_j]$ is a diagonal segment of $P_k$.
\end{lemma}

This is the key point of the arguments.

By Step 2, the triangulations $\mathcal{T}_k=(V^k, E^k, F^k)$ give a generalized triangulation $\mathcal{T}=(V, E, F)$ of the limit polygon $P$. It is clear that some elements in $F$ are degenerate limit triangles. For $x\in V,E,F$, we denote by $x^k\in V^k,E^k,F^k$ respectively, such that $\lim\limits_{k\rightarrow \infty}x^k=x$. By the construction of the limit polygon, $x^k$ is uniquely determined by $x$ and thus is well defined.

For the diagonal segment $[v_i,v_j]\subset P$, we set
\begin{align}\label{vw}
[v_i,v_j]\cap E=\{v_i=w_1,w_2,\cdots,w_{K_1}=v_j\},
\end{align}
where $K_1$ does not exceed a number depending only on $n$ and $[v_i,v_j]=\bigcup\limits_{q=1}^{K_1-1}[w_q,w_{q+1}]$ with $[w_q,w_{q+1}]$ being contained in a regular limit triangle $\sigma_q\in F$. But some $w_q$ is in some degenerate limit triangles which are in the same line.

We call a point $x\in \{w_1,w_2,\cdots,w_{K_1}\}$ is a regular limit point, if it is contained in no degenerate limit triangles in $F$. Otherwise we call $x$ is a degenerate limit point.

For $w_1=v_i$ , set $z_1^k=v_i^k$. 

For $2\leq q\leq K_1-1$, then $w_q\notin V$ of $P$. Assume $w_q$ is a regular limit point in $F$ and $w_q\in [v_{q,1},v_{q,2}]$, where $[v_{q,1},v_{q,2}]\in E$. Let $z^k_q\in [v^k_{q,1},v^k_{q,2}]$ such that
$$d(z^k_q,v^k_{q,1})/d(v^k_{q,1},v^k_{q,2})=d(w_q,v_{q,1})/d(v_{q,1},v_{q,2}).$$

If $w_q$ is a degenerate limit point in $F$, $[w_{q-1},w_q]$ is contained in some regular limit triangle $\sigma_{q}\in F,$ then there exists $\sigma_{q}^k\in F^k$ satisfying $\lim\limits_{k\rightarrow \infty}\sigma_{q}^k=\sigma_{q}$.

Similarly, $[w_{q},w_{q+1}]$ is contained in some regular limit triangle $\sigma_{q+1}\in F$. Then there exists $\sigma_{q+1}^k\in F^k$ satisfying $\lim\limits_{k\rightarrow \infty}\sigma_{q+1}^k=\sigma_{q+1}$.
One may assume
$$w_q\in [v_{q,1},v_{q,2}]\subset \sigma_{q}$$
and
$$w_{q+1}\in [v_{q+1,1},v_{q+1,2}]\subset \sigma_{q+1}.$$
Let $z^k_{q,0}\in [v^k_{q,1},v^k_{q,2}]\subset \sigma_{q}^k$
satisfy
$$d(z^k_{q,0},v^k_{q,1})/d(v^k_{q,1},v^k_{q,2})=d(w_q,v_{q,1})/d(v_{q,1},v_{q,2}).$$

Let $\sigma^k_{q,1}\in F^k$ be a triangle containing $[v^k_{q,1},v^k_{q,2}]$ distinct to $\sigma^k_q$. One easily sees $\sigma_{q,1}\in F$ is a degenerate limit triangle. We may assume the vertices of $\sigma_{q,1}$ are $v_{q,1},v_{q,2},v_{q,3}$. Since $w_q\notin V$, it is in exactly one of the two shorter edges of $\sigma_{q,1}$. Assume $w_q\in [v_{q,1},v_{q,3}]$. Let $z^k_{q,1}\in [v^k_{q,1},v^k_{q,3}]\subset \sigma_{q,1}^k$ satisfy
$$d(z^k_{q,1},v^k_{q,1})/d(v^k_{q,1},v^k_{q,3})=d(w_q,v_{q,1})/d(v_{q,1},v_{q,3}),$$
and $\sigma_{q}^k\neq\sigma_{q,1}^k\in F^k$.

If $\sigma_{q,1}$ is a regular limit triangle in $F$, then one easily sees $\sigma_{q,1}=\sigma_{q+1}\in F$. Otherwise we can repeat the above procedures in finitely many steps to terminate with $\sigma_{q,a}=\sigma_{q+1}$ which is a regular limit triangle in $F$, where $a$ is a positive integer.

At the end, we get $z^k_{q,0},z^k_{q,1},z^k_{q,2},\cdots,z^k_{q,a+1}=z^k_{q+1}$ and the first $a+1$ terms share the same limit point $w_q$ as $k\to\infty$.

Thus we obtain a piecewise geodesic (segment) curve
$$[z^k_1,z^k_2,\cdots,z^k_K]=\bigcup\limits_{q=1}^{K-1}[z^k_q,z^k_{q+1}]\subset P_k$$
such that $[z^k_q,z^k_{q+1}]\subset \sigma_q\in F^k$ and $[z^k_1,z^k_2,\cdots,z^k_K]\rightarrow [v_i,v_j]$ (in the sense of the limit in Step 2 ) as $k\rightarrow \infty$ , where $z_1=w_1=v_i,z_K=w_{K_1}=v_j$ and $K$ does not exceed a number depending only on $n$.

Note that if we introduce an order $``\leq"$ to $[z_1,z_K]$ with $z_1\leq z_K$, then $z_1\leq z_2\leq \cdots\leq z_K$.

A useful observation is established as follows:
\begin{lemma}\label{disjoint}

For the piecewise geodesic curve
$$[z^k_1,z^k_2,\cdots,z^k_K],$$
$\sigma^k_1,\sigma^k_2,\cdots,\sigma^k_{K-1}$ are pairwise distinct triangles in $F^k$, where
$$[z^k_q,z^k_{q+1}]\subset \sigma^k_q.$$

\end{lemma}

\begin{proof}



Note that $\sigma^k_q,\sigma^k_{q+1}$ are distinct for $1\leq q\leq K-1$. Assume $\sigma^k_q=\sigma^k_p$ with $1\leq q\leq p-2<p\leq K$ and $\sigma^k_s\neq\sigma^k_r$ with any $q\leq s\leq r\leq p-1$.

Suppose that $\sigma_q$ is a degenerate limit triangle in $F$. We may assume $z_q\neq z_p$, then there exists some $e\in E$ which is also an edge of $P$ such that $$\emptyset\neq[z_1,z_K]\cap e\nsubseteq V.$$ This violates the definition of diagonal segment $[z_1=v_i,z_K=v_j]$. So that $z_q=z_p$, then $z_q=z_{q+1}=\cdots=z_p$. We can set $\hat z^k_{q+i}=z^k_{p-i}$ for $0\leq i\leq p-q$. This gives two ways to define $z^k_{p+1},z^k_{p+2},\cdots,z^k_q$, which is impossible by the construction of $[z^k_1,z^k_2,\cdots,z^k_K]$.

Suppose that  $\sigma_p$ is a regular limit triangle in $F$. On the other hand, $[z^k_q,z^k_{q+1},\cdots,z^k_p]\rightarrow [z_q,z_p]$ as $k\rightarrow \infty$, thus we must have $q=p-1$, which is a contradiction.

\end{proof}

We deduce the following corollary.
\begin{corollary}\label{simple segment}
The piecewise geodesic curve $[z^k_1,z^k_2,\cdots,z^k_K]$ is a simple curve in the plane, i.e $[z^k_1,z^k_2,\cdots,z^k_K]$ has no self-intersection points.
\end{corollary}


We may assume $d(z^k_s,z^k_t)$ is less than or equal to the circumference of $P$, i.e $d(z^k_s,z^k_t)\leq n$, where $k$ is sufficiently large and $1\leq s,t\leq K$.

Recall that for a degenerate limit point $w\in \{w_1,w_2,\cdots,w_{K_1}\}$, there exist $1\leq s_1\leq s_2\leq K$ such that $\{z^k_{s_1},z^k_{s_1+1},\cdots,z^k_{s_2}\}$ share the same limit point $w$ as $k\rightarrow \infty$. Moreover, $[z^k_s,z^k_{s+1}]\subset \sigma^k_s\in F^k$ with that $\sigma_s\in F$ is a degenerate limit triangle for $s_1\leq s\leq s_2-1$, and $[z^k_{s_1-1},z^k_{s_1}]\subset \sigma^k_{s_1-1}\in F^k$, $[z^k_{s_2},z^k_{s_2+1}]\subset \sigma^k_{s_2}\in F^k$ such that $\sigma_{s_1-1}$, $\sigma_{s_2}$ are regular limit triangles in $F$.

By the above discussion, for each $z^k_q$ with $2\leq q \leq K-1$, there exists the unique $e_q^k\in E^k$ containing $z_q^k$ and we denote by $l^k_q$ the line containing $e_q^k$. One assumes $z_q=w_{q\prime}$, and then there exists $e_{q\prime}\in E$ containing $w_{q\prime}$. Denote by ${l}_{q\prime}$ the line containing $e_{q\prime}$.

For $1\leq q_1, q_2,q_3\leq K$, we denote by $\alpha^k_{q_1,q_2,l_{q_2}}$, $\alpha^k_{q_1,q_2,l_{q_1}}$, $\alpha^k_{q_1,q_2,q_3}$ the angle between $[z^k_{q_1},z^k_{q_2}]$ and $l^k_{q_2}$, the angle between $[z^k_{q_1},z^k_{q_2}]$ and $l^k_{q_1}$, the angle between $[z^k_{q_1},z^k_{q_2}]$ and $[z^k_{q_2},z^k_{q_3}]$ respectively.

If $z_q=w_{q\prime}$ is a regular limit point in $F$. We denote by $\alpha_{q,1},\alpha_{q,2}\in (0,\pi)$ the angle between $[w_{q\prime-1},w_{q\prime}]$ and $\l_{q\prime}$, the angle between $[w_{q\prime},w_{q\prime+1}]$ and $\l_{q\prime}$, respectively. Then for sufficiently large $k$, we may assume
\begin{align}\label{regular angle}
|\alpha^k_{q-1,q,l_q}-\alpha_{q,1}|\leq 1/k,|\alpha^k_{q,q+1,l_q}-\alpha_{q,2}|\leq 1/k.
\end{align}
It is clear that $\alpha_{q,1}+\alpha_{q,2}=\pi$.

If $z_q=w_{q\prime}$ is a degenerate limit point in $F$, we may assume
 $$z^k_q\in  [z^k_{s_1},z^k_{s_1+1},\cdots,z^k_{s_2}]$$
as in the above with  $1\leq s_1\leq q\leq s_2\leq K$. We denote by $\alpha_{s_1,1},\alpha_{s_2,2}\in (0,\pi)$ the angle between $[w_{q\prime-1},w_{q\prime}]$ and $\l_{q\prime}$, the angle between $[w_{q\prime},w_{q\prime+1}]$ and $\l_{q\prime}$, respectively.
Then $z^k_q\in e^k_q=\sigma^k_{q-1}\cap\sigma^k_{q}$ and $\sigma^k_{q-1},\sigma^k_{q}$ are both degenerate limit triangles, for $s_1+1\leq q\leq s_2-1$. We may assume

\begin{align}\label{degenerate distance}
d(z^k_{q-1},z^k_q)\leq 1/k,
\end{align}

and

\begin{align}
|\alpha^k_{s_1-1,s_1,l_{s_1}}-\alpha_{s_1,1}|\leq 1/k,\ |\alpha^k_{s_2+1,s_2,l_{s_2}}-\alpha_{s_2,2}|\leq 1/k.\label{degenerate angle}
\end{align}
Since the distance between distinct vertices in $V$ is at least one, for sufficiently large $k$ and $s_1+1\leq k\leq s_2$, we can assume

\begin{align}\label{1/k-parallel}
l^k_{q-1},\ l^k_q\ \mathrm{are}\ 1/k-\mathrm{parallel}.
\end{align}

We interrupt with a useful and elementary lemma, and leave the proof to interested readers.
\begin{lemma}\label{parallel angle}
Let five points $x,o,o^\prime,y,y^\prime$ in the plane such that neither $x,o,y$ nor $x,o^\prime,y^\prime$ are in a line. If the line containing $o,y$ and the line containing $o^\prime,y^\prime$ are $\theta$-parallel, where $\theta\in [0,\pi/2]$. Then $|\angle xoy-\angle xo^\prime y^\prime|\leq \angle oxo^\prime+\theta$ if the product of two determinants $det(\overrightarrow{ox},\overrightarrow{oy})det(\overrightarrow{o^\prime x},\overrightarrow{o^\prime y^\prime})>0$.
\end{lemma}

We first demonstrate two special cases of the main lemma.

Case 1: $z^k_1,z^k_2,\cdots,z^k_{K-1}$ share the same limit point $z_1=z_2=\cdots=z_{K-1}=w_1=v_i$ as $k$ tends to $\infty$. We claim that there is a constant $c(k,K,n)$ such that $[z^k_1,z^k_K]$ is a $c(k,K,n)$-follow of $[z^k_1,z^k_2,\cdots,z^k_{K}]$ with $c(k,K,n)\rightarrow 0$ as $k\rightarrow \infty.$ Furthermore, $[z^k_z,z^k_K]\subset P_k$
for sufficiently large $k$ and $[z^k_1,z^k_K]\rightarrow [z_1=v_i,z_K=v_j]$ as $k\rightarrow \infty.$
\begin{proof}
Note that $d(z^k_1,z^k_K)=d(v^k_i,v_j^k)\geq 1$.
Since $z_1=z_2=\cdots=z_{K-1}=w_1=v_i$ is a degenerate limit point, by (\ref{degenerate distance}) we have $d(z^k_{q},z^k_{q+1})\leq 1/k$ for $1\leq q\leq K-2$. A simple calculation shows that, for simplicity, we can assume
\begin{align}\label{K-1,K,1}
\alpha^k_{1,K,K-1}\leq 1/k.
\end{align}
By (\ref{degenerate angle}), we have
\begin{align}\label{aK-1,2}
|\alpha^k_{K-1,K,l_{K-1}}-\alpha_{K-1,2}|\leq 1/k
\end{align}
and
\begin{align}\label{K-1,2}
\alpha_{K-1,2}\in (0,\pi).
\end{align}

We deduce that $l_{K-1}\cap l_{z^k_1,z^k_K}$ is exactly one point denoted by $\hat z^k_{K-1}$, for $k$ is sufficiently large.

If $\hat z^k_{K-1}\in (z^k_K,+\infty)$, then $\alpha^k_{1,K,K-1}=\alpha^k_{K-1,K,l_{K-1}}+\angle z^k_{K-1}\hat z^k_{K-1}z^k_K$. This is impossible by (\ref{K-1,K,1}), (\ref{aK-1,2}) and (\ref{K-1,2}), when $k$ is sufficiently large. Thus we may assume $\hat z^k_{K-1}\in (-\infty,z^k_K]$. By the Law of Sines, we get
$$d(z^k_{K-1},\hat z^k_{K-1})=d(z^k_{K-1},z^k_K)\sin\alpha^k_{1,K,K-1}/\sin\angle z^k_{K-1}\hat z^k_{K-1}z^k_K.$$

Using (\ref{K-1,K,1}), (\ref{aK-1,2}) and (\ref{K-1,2}) again, $d(z^k_{K-1},z^k_K)\leq n$ and $$\angle z^k_{K-1}\hat z^k_{K-1}z^k_K=\pi-\alpha^k_{K-1,K,1}-\angle z^k_Kz^k_{K-1}\hat z^k_{K-1}=\alpha^k_{K-1,K,1}-\alpha^k_{K-1,K,1},$$ we obtain
\begin{align}\label{K-1}
d(z^k_{K-1},\hat z^k_{K-1})\leq c(n,k), \mathrm{where}\,\, c(n,k)\rightarrow 0\,\, as\,\, k\rightarrow \infty.
\end{align}
If $\hat z^k_{K-1}\in (-\infty,z^k_1)$, by the fact that $z^k_{K-2},z^k_K$ are on different sides of $l_{K-1}$, we have
$$[z^k_1,z^k_2,\cdots,z^k_{K-2}]\cap [z^k_{K-1},\hat z^k_{K-1}]\neq \emptyset.$$

By (\ref{K-1}) we can assume $[z^k_{K-1},\hat z^k_{K-1}]$ is contained in some edge in $E^k$, which is impossible by Lemma \ref{disjoint} as $k$ is sufficiently large.
Thus we get $\hat z^k_{K-1}\in [z^k_1,z^k_K]$ with $d(z^k_{K-1},\hat z^k_{K-1})\leq c(k,n)$.

For any $2\leq q\leq K-2$, we need to show that there exists $\hat z^k_q$ satisfying $\{\hat z^k_q\}=l_q\cap [z^k_1,z_K^k]$ and $d(z^k_{q},\hat z^k_{q})\leq c(k,n)$.
Since $$d(z^k_q,z^k_1)\leq d(z^k_1,z^k_2)+\cdots+d(z^k_{q-1},z^k_{q})\leq q/k\leq K/k$$ and $d(z^k_1,z^k_K)\geq 1$, we have
\begin{align}\label{q,K,1}
\alpha^k_{q,K,1}\leq c(k,K)\rightarrow 0\,\,as\,\,k\rightarrow\infty.
\end{align}
Thus one can prove that
\begin{align}\label{q,K,K-1}
\alpha^k_{q,K,K-1}\leq \alpha^k_{q,K,1}+\alpha^k_{1,K,K-1}\leq c(k,K)\rightarrow 0\,\,as\,\,k\rightarrow\infty.
\end{align}

Note that $l_i,l_{i+1}$ are $1/k$-parallel for $1\leq i\leq K-2$, then $l_q,l_{K-1}$ are $K/k$-parallel. By (\ref{q,K,1}) (\ref{q,K,K-1}) and Lemma \ref{parallel angle}, one has
\begin{align}\label{K,q,lq}
|\alpha^k_{q,K,l_q}-\alpha^k_{K,K-1,l_{K-1}}|\leq c(k,K)\rightarrow 0\,\,as\,\,k\rightarrow\infty.
\end{align}

Recall that $d(z^k_q,z^k_K)\leq n$. Similar to the above argument, we can show that there exists $\hat z^k_q$ satisfying
$$\{\hat z^k_q\}=l_q\cap [z^k_1,z_K^k],$$

and

$$d(z^k_{q},\hat z^k_{q})\leq c(k,K,n)\rightarrow 0\,\,as\,\,k\rightarrow \infty.$$

Thus we obtain $[z^k_1,z^k_K]$ is a $c(k,K,n)$-follow of $[z^k_1,z^k_2,\cdots,z^k_{K}]$ with
$$c(k,K,n)\rightarrow 0,$$
as $k\rightarrow \infty.$ Observe that for sufficiently large $k$, $[z^k_q,\hat z^k_q]\subset \sigma^k_q\cap l_q$ with $2\leq q\leq K-1$, where $z^k_q\in e^k_q=\sigma^k_{q-1}\cap \sigma^k_q$. One can easily get $[\hat z^k_{q-1},\hat z^k_q]\subset \sigma^k_{q-1}\in F^k$ for $2\leq q\leq K,$ where we set $\hat z^k_{1}=z^k_1,\hat z^k_{K}=z^k_K.$

Finally, we obtain $[z^k_1,z^k_K]\subset\bigcup\limits^k_{q=2}[\hat z^k_{q-1},\hat z^k_q]\subset P_k$ for sufficiently large $k$ and $[z^k_1,z^k_K]\rightarrow [z_1=v_i,z_K=v_j]$ as $k\rightarrow \infty.$

\end{proof}

Case 2: No point in $\{z_1,z_2,\cdots,z_K\}$ is a degenerate limit point. We claim that there is a constant $c(k,K,n)$ such that $[z^k_1,z^k_K]$ is a $c(k,K,n)$-follow of $[z^k_1,z^k_2,\cdots,z^k_{K}]$ with $c(k,K,n)\rightarrow 0$ as $k\rightarrow \infty.$ Furthermore, $[z^k_z,z^k_K]\subset P_k$
for sufficiently large $k$ and $[z^k_1,z^k_K]\rightarrow [z_1=v_i,z_K=v_j]$ as $k\rightarrow \infty.$

\begin{proof}
For $2\leq q\leq K-1$, $[z^k_1,z^k_2,\cdots,z^k_q]$ is a simple curve by Corollary \ref{simple segment}. If $[z^k_1,z^k_2,\cdots,z^k_q]\cap(z^k_1,z^k_q)=\emptyset$, then there is a polygon in the plane such that its boundary is $[z^k_1,z^k_2,\cdots,z^k_q]\cup (z^k_1,z^k_q)$ by Jordan Curve Theorem. A simple computation shows that
\begin{align*}
\alpha^k_{2,1,q}+\alpha^k_{1,q,q-1}&=\pi-\alpha^k_{1,2,3}+\pi-\alpha^k_{2,3,4}+\cdots+\pi-\alpha^k_{q-2,q-1,q}\\&\leq (q-2)/k\leq K/k.
\end{align*}
Suppose that $[z^k_1,z^k_2,\cdots,z^k_q]\cap(z^k_1,z^k_q)\neq \emptyset$. In this subcase if $[z^k_1,z^k_q)\cap [z^k_{q-1},z^k_q]\neq \emptyset$, then $\alpha^k_{q-1,q,1}=0$. Otherwise, there is $x\in [z^k_1,z^k_2,\cdots,z^k_q]\cap(z^k_1,z^k_q)$ such that the subcurve of $[z^k_1,z^k_2,\cdots,z^k_q]$ from $x$ to $z^k_q$ does not intersect with $(z^k_1,z^k_q)$ and the argument similar to the above implies $\alpha^k_{q-1,q,1}\leq K/k$.
By symmetry, we have $\alpha^k_{2,1,q}\leq K/k$.

Thus for $2\leq q\leq K$, we have
\begin{align}\label{2,1,q}
\alpha^k_{q-1,q,1}\leq K/k,\,\,\alpha^k_{2,1,q}\leq K/k.
\end{align}
Similarly we have
\begin{align}\label{q+1,q,K}
\alpha^k_{q+1,q,K}\leq K/k,\,\,\alpha^k_{K-1,K,q}\leq K/k.
\end{align}

By (\ref{2,1,q}), we obtain for $2\leq q\leq K-1$
\begin{align}\label{q,1,K}
\alpha^k_{q,1,K}\leq \alpha^k_{2,1,q}+\alpha^k_{2,1,K}\leq 2K/k.
\end{align}

Similarly, we obtain for $2\leq q\leq K-1$
\begin{align}\label{q,K,1}
\alpha^k_{q,K,1}\leq 2K/k.
\end{align}

Applying $|\alpha^k_{q-1,q,l_q}-\alpha^k_{1,q,l_q}|\leq \alpha^k_{q-1,q,1}$, (\ref{2,1,q}) and $|\alpha^k_{q-1,q,l_q}-\alpha_{q,1}|\leq 1/k,$ we have
\begin{align}\label{1,q,lq}
|\alpha^k_{1,q,l_q}-\alpha_{q,1}|\leq (K+1)/k,\,\,\alpha_{q,1}\in (0,\pi).
\end{align}

Similarly, we have
\begin{align}\label{K,q,lq}
|\alpha^k_{K,q,l_q}-\alpha_{q,2}|\leq (K+1)/k,\,\,\alpha_{q,2}\in (0,\pi).
\end{align}

Recall that $\alpha_{q,1}+\alpha_{q,2}=\pi$, hence $z^k_1,z^k_K$ are on the different sides of $l_q$. This yields that there exists $\hat z^k_q$ such that $l_q\cap [z^k_1,z^k_K]=\{\hat z^k_q\}$. On the other hand, using the Law of Sines, (\ref{1,q,lq}), (\ref{q,1,K}) and $d(z^k_1,z^k_q)\leq n$, we have
\begin{align}\label{small dq}
d(z^k_q,\hat z^k_q)&=d(z^k_1,z^k_q)\sin\alpha^k_{q,1,K}/\sin\angle z^k_q\hat z^k_qz^k_1
\\&=d(z^k_1,z^k_q)\sin\alpha^k_{q,1,K}/\sin(\pi-\alpha^k_{q,1,K}-\alpha^k_{1,q,l_q})\nonumber
\\&\leq c(k,K,n)\rightarrow 0\,\,as\,\,k\rightarrow \infty.\nonumber
\end{align}

Thus, we show $[z^k_1,z^k_K]$ is a $c(k,K,n)$-follow of $[z^k_1,z^k_2,\cdots,z^k_K]$ for sufficiently large $k$. A same argument as in Case 1 yields that $[z^k_1,z^k_K]\subset P_k$ for sufficiently large $k$ and $[z^k_1,z^k_K]\rightarrow [z_1=v_j,z_K=v_j]$ as $k\rightarrow \infty.$
\end{proof}

Now we give the proof of Lemma~\ref{main lemma}.
\begin{proof}[Proof of Lemma~\ref{main lemma}]
We argue by induction on $s$, where $s$ is the number of degenerate limit points in $\{w_1,w_2,\cdots,w_{K_1}\}$ in (\ref{vw}).

For $s=0$, it is the Case 2.

For $s=1$, there are essentially two subcases.

Subcase 1: We may assume $z_1=z_2=\cdots=z_q$ with $z_{q+1}\neq z_q$ for $2\leq q\leq K-1$. Applying the argument in Case 2 to $[z^k_q,z^k_{q+1},\cdots,z^k_K]$, we have
\begin{align}\label{1q+1,q,K}
\alpha^k_{q+1,q,K}\leq K/k,
\end{align}
and $[z^k_1,z^k_2,\cdots,z^k_q,z^k_K]$ is also a simple curve in $P_k$. Then by (\ref{1q+1,q,K}) we get
\begin{align}\label{1K,q,lq}
|\alpha^k_{K,q,l_q}-\alpha_{q,2}|&\leq |\alpha^k_{K,q,l_q}-\alpha_{q+1,q,l_q}|+|\alpha_{q+1,q,l_q}-\alpha_{q,2}|
\\&\leq \alpha^k_{q+1,q,K}+1/k\nonumber
\\&\leq (K+1)/k.\nonumber
\end{align}

Next by applying Case 1 to $[z^k_1,z^k_{2},\cdots,z^k_q,z^k_K]$, for sufficiently large $k$ there is $\hat z^k_s$ such that $l_s\cap [z^k_1,z^k_K]=\{\hat z^k_s\}$ and
\begin{align}\label{q,K}
d(z^k_s,\hat z^k_s)\leq c(k,K,n)\rightarrow 0\,\,as\,k\rightarrow \infty,
\end{align}
where $s\in \{1,2,\cdots,q,K\}$.

Similarly, applying Case 1 to $[z^k_1,z^k_2,\cdots,z^k_{q+1}]$ we have
\begin{align}\label{1q+1}
&\qquad \quad|\alpha^k_{1,q+1,l_{q+1}}-\alpha_{q+1,1}|\leq (K+1)/k,
\\&|\alpha^k_{1,q+1,q+2}-\pi|\leq|\alpha^k_{1,q+1,q+2}-\alpha^k_{q,q+1,q+2}|+|\alpha^k_{q,q+1,q+2}-\pi|\label{lq+11}
\\&\qquad \qquad\qquad\ \ \leq \alpha^k_{1,q+1,q}+1/k\leq (K+1)/k.\nonumber
\end{align}

Thanks to (\ref{1q+1}) and (\ref{lq+11}), we can apply Case 2 to $[z^k_1,z^k_{q+1},z^k_{q+2},\cdots,z^k_K]$. Then for sufficiently large $k$ there is $\hat z^k_s$ such that $l_s\cap [z^k_1,z^k_K]=\{\hat z^k_s\}$ and
\begin{align}\label{q+1,K}
d(z^k_s,\hat z^k_s)\leq c(k,K,n)\rightarrow 0\,\,as\,k\rightarrow \infty.
\end{align}
where $s\in \{1,q+1,q+2,\cdots,K\}$.

Combining (\ref{q,K}) with (\ref{q+1,K}), we have $[z^k_1,z^k_K]$ is the $c(k,K,n)$-follow of $[z^k_1,z^k_2,\cdots,z^k_K]$ for lager enough $k$. And an argument similar to Case 1 yields that $[z^k_1,z^k_K]\subset P_k$ with $$[z^k_1,z^k_K]\rightarrow [z_1=v_i,z_K=v_j]\,\,as\,k\rightarrow \infty.$$

Subcase 2: We may assume $z_{s+1}=z_{s+2}=\cdots=z_t$ with $z_{s}\neq z_{s+1}$ and $z_{t+1}\neq z_t$ for $2\leq s+1<t\leq K-1$. Similar to the argument in Subcase 1, we apply Case 1 to $[z^k_s,z^k_{s+1},z^k_{q+2},\cdots,z^k_t]$, and then apply Case 2 to $[z^k_1,z^k_2,\cdots,z^k_{s},z^k_{t},z^k_{t+1},\cdots,z^k_K]$. This yields that for sufficiently large $k$ there is $\hat z^k_p$ such that $l_p\cap [z^k_1,z^k_K]=\{\hat z^k_p\}$ with
\begin{align}\label{s,t}
d(z^k_p,\hat z^k_p)\leq c(k,K,n)\rightarrow 0\,\,as\,k\rightarrow \infty,
\end{align}
where $p\in \{1,2,\cdots,s,{t},{t+1},\cdots,K\}$.

Note that for sufficiently large $k$ with $s+1\leq q\leq t$, we have $$d(z^k_q,z^k_K)\geq d(z_q,z_K)/2\geq d(w_{K_1-1},w_{K_1})/2>0$$ $$\mathrm{and}\ d(z^k_q,z^k_t)\leq (t-s)/k\leq K/k.$$ This implies $\alpha^k_{q,K,t}\leq c(k,K)\rightarrow 0$ as $k\rightarrow \infty$. Applying Case 2 to $[z^k_t,z^k_{t+1},\cdots,z^k_K]$, we can get
\begin{align}
\alpha^k_{t+1,t,K}\leq K/k.\label{t+1,t,K}
\end{align}

Recall that $l_q,l_t$ are $K/k$-parallel, by Lemma \ref{parallel angle} and (\ref{t+1,t,K}), we have
\begin{align*}
|\alpha^k_{K,q,l_q}-\alpha_{t,2}|
&\leq |\alpha^k_{K,q,l_q}-\alpha^k_{K,t,l_t}|+|\alpha^k_{K,t,l_t}-\alpha^k_{t+1,t,l_t}|+|\alpha^k_{t+1,t,l_t}-\alpha_{t,2}|
\\&\leq \alpha^k_{q,K,t}+K/k+\alpha^k_{t+1,t,l_t}+1/k
\\&\leq c(k,K)+(2K+1)/k,
\end{align*}
where $\alpha_{t,2}\in (0,\pi)$.

Then similar to the argument in Case 1, we have for sufficiently large $k$ there is $\hat z^k_q$ such that $l_q\cap (-\infty,z^k_K]=\{\hat z^k_q\}$ and
\begin{align}\label{11q}
d(z^k_q,\hat z^k_q)\leq c(k,K,n)\rightarrow 0\,\,as\,k\rightarrow \infty,
\end{align}
where $s+1\leq q\leq t$.

By symmetry, one can deduce that $|\alpha^k_{1,q,l_q}-\alpha_{s+1,1}|\leq c(k,K)+(2K+1)/k$, where $\alpha_{s+1,1}\in (0,\pi)$. This implies $\hat z^k_q\in [z^k_1,+\infty)$.

Thus by (\ref{11q}) and the above result, we have for sufficiently large $k$, there is $\hat z^k_q$ such that $l_q\cap [z^k_1,z^k_K]=\{\hat z^k_q\}$ and
\begin{align}\label{1q}
d(z^k_q,\hat z^k_q)\leq c(k,K,n)\rightarrow 0\,\,as\,k\rightarrow \infty,
\end{align}
where $s+1\leq q\leq t$.

Combining (\ref{s,t}) with (\ref{1q}), similar to Case 1, we finish the proof in Subcase 2.

Thus we finish the proof for $s=0,1$. For $s\geq 2$, we may assume $z_1,z_2,\cdots,z_p$ have exactly one degenerate limit point with $2\leq p\leq K-2$ and $z_p\neq z_{p+1}$. Using the case of $s=1$ to $[z^k_1,z^k_2,\cdots,z^k_{p+1}]$, we have for sufficiently large $k$, $[z^k_1,z^k_{p+1}]\subset P_k$ and the piecewise geodesic curve $$[z^k_1,z^k_2,\cdots,z^k_{p+1}]\rightarrow [z_1,z_{p+1}]\ as\ k\rightarrow \infty.$$

Note that $|\alpha^k_{1,p+1,l_{p+1}}-\alpha^k_{p,p+1,l_{p+1}}|\leq \alpha^k_{1,p+1,p}\leq c(k,K)\rightarrow 0$ as $k\rightarrow \infty.$

The piecewise geodesic curve $[z^k_1,z^k_{p+1},z^k_{p+2},\cdots,z^k_{K}]$ has $s-1$ degenerate limit points, hence one can use the induction assumption for
$$[z^k_1,z^k_{p+1},z^k_{p+2},\cdots,z^k_K],$$
and obtain for sufficiently large $k$ ,there is $\hat z^k_q$ such that $l_q\cap [z^k_1,z^k_K]=\{\hat z^k_q\}$ and
\begin{align}\label{2q}
d(z^k_q,\hat z^k_q)\leq c(k,K,n)\rightarrow 0\,\,as\,k\rightarrow \infty.
\end{align}
where $p+1\leq q\leq K$.

Similarly, using the induction assumption for
$$[z^k_{p+1},z^k_{p+2},\cdots,z^k_K]$$
and the case of $s=1$ to $[z^k_1,z^k_2,\cdots,z^k_{p+1},z^k_{K}]$, we have
for sufficiently large $k$ ,there is $\hat z^k_q$ such that $l_q\cap [z^k_1,z^k_K]=\{\hat z^k_q\}$ and
\begin{align}\label{3q}
d(z^k_q,\hat z^k_q)\leq c(k,K,n)\rightarrow 0\,\,as\,k\rightarrow \infty.
\end{align}
where $1\leq q\leq p+1$.

Combining (\ref{2q}) with (\ref{3q}), similar to Case 1, we complete the proof.

\end{proof}


Therefore, we show that for sufficiently large $k$, the segment $[v_k^i,v_k^j]$ is indeed a diagonal segment in $P_k$ whenever $[v_i,v_j]$ is a diagonal edge of the diagonal triangulation $\Sigma$ of $P$. Moreover $[v_i^k,v_j^k]\rightarrow [v_i,v_j]$ as $k\rightarrow \infty$.

Finally, the resulted triangles $\bigtriangleup v_iv_jv_m$ in $\Sigma$ of $P$ correspond to the triangles $\bigtriangleup v^k_iv^k_jv^k_m$ which gives a diagonal triangulation of $P_k$ for sufficiently large $k$. And the resulted triangles $\bigtriangleup v^k_iv^k_jv^k_m$ approximate those of $P$ in the sense of the limit in Step 2. This yields a contradiction.

\end{proof}

For each $\sigma\in F,$ there is a triangulation of $\sigma,$ denoted by $\mathcal{T}(\sigma),$ satisfying the properties in Theorem~\ref{lem:triangle}. We triangulate each $\sigma\in F$ as above, and obtain a geometric triangulation of $\R^2,$ denoted by $\mathcal{T}=(V',E',F'),$ where $V'=V,$ $E'$ (resp. $F'$) consists of edges (resp. faces) of $\mathcal{T}(\sigma),$ $\sigma\in F.$ The triangulation $\mathcal{T}$ is called an {associated triangulation} of $\R^2$ for the penny graph $G.$ We have the following lemma.

\begin{lemma}\label{lem:t1} Let $G$ be an infinite penny graph with bounded facial degree.
Then there exists an associated triangulation $\mathcal{T}=(V',E',F')$ such that
 for any triangle $\tau=\{v_1,v_2,v_3\}\in F',$ for any distinct $i,j,k\in \{1,2,3\},$
\begin{equation}\label{eq:dd1}\angle v_iv_jv_k\geq \frac{1}{C(D)},\quad 1\leq |v_i-v_j|\leq C(D),\end{equation}
where $\angle v_iv_jv_k$ is the angle at the vertex $v_j,$ and $C(D)$ is a constant depending on $D.$
\end{lemma}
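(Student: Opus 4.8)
The plan is to derive Lemma~\ref{lem:t1} as a direct consequence of Theorem~\ref{lem:triangle} together with the geometric constraints on a penny graph. First, I would record the two structural facts about an infinite penny graph $G=(V,E,F)$ with bounded facial degree that make Theorem~\ref{lem:triangle} applicable face by face: every edge $\{v_i,v_j\}\in E$ satisfies $|\phi(v_i)-\phi(v_j)|=1$ (tangent circles of radius $\tfrac12$), and any two non-adjacent vertices satisfy $|\phi(v_i)-\phi(v_j)|>1$ (disjoint open disks of diameter $1$ force centers to be more than $1$ apart). Hence for each $\sigma\in F$, its embedded image $\phi(\sigma)$ is a polygonal domain whose boundary is a closed piecewise-linear curve of $\deg(\sigma)\le D$ unit segments, and distinct boundary vertices lie at distance $\ge 1$; thus $\phi(\sigma)$ is precisely an $n$-polygon in the sense of Definition~\ref{def:diagonal} with $n=\deg(\sigma)\le D$, edges of length one, and distinct vertices at distance $\ge 1$.

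Next I would apply Theorem~\ref{lem:triangle} to each such face $\phi(\sigma)$: for $n=\deg(\sigma)$ there is a diagonal triangulation $\mathcal{T}(\sigma)$ of $\phi(\sigma)$ whose triangles have all interior angles bounded below by a positive constant $\theta(n)$ depending only on $n$. Since $n$ ranges over the finite set $\{3,4,\dots,D\}$, setting $\theta_0:=\min_{3\le n\le D}\theta(n)>0$ gives a uniform lower bound, depending only on $D$, valid for every face. Assembling these face-wise triangulations yields the associated triangulation $\mathcal{T}=(V',E',F')$ with $V'=V$: the faces $\phi(\sigma)$ partition $\R^2$ up to boundary, the diagonal triangulations agree on the shared boundary edges (which are honest edges of $G$ and hence untouched by the triangulation process), so $\mathcal{T}$ is a bona fide geometric triangulation of the plane. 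Each $\tau=\{v_1,v_2,v_3\}\in F'$ is a triangle in some $\mathcal{T}(\sigma)$, so every angle $\angle v_iv_jv_k\ge \theta_0=:1/C(D)$.

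It remains to bound the side lengths. Every side of a triangle $\tau\in F'$ is either an edge of $G$ — length exactly $1$ — or a diagonal segment $[v_i,v_j]$ of some $\phi(\sigma)$; in the latter case $v_i,v_j$ are distinct vertices of the penny graph, so $|v_i-v_j|\ge 1$ by the disjointness of the disks, giving the lower bound $1\le |v_i-v_j|$. For the upper bound, note that the boundary $\partial\phi(\sigma)$ consists of at most $D$ unit segments, so its total length is at most $D$; any diagonal segment has both endpoints on $\partial\phi(\sigma)$ and hence (again because distinct vertices are at distance $\ge1$ and the polygon has bounded perimeter) has length at most $D/2\le D$. Alternatively, once the lower angle bound $\theta_0$ is in hand, one can bound the longest side of each triangle in terms of its shortest side (a unit edge or a diagonal, of length $\le D$) via the law of sines: the ratio of any two sides is at most $\sin(\pi-\theta_0)/\sin\theta_0$, so all three sides are $\le D/\sin\theta_0$. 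Taking $C(D):=\max\{D/\sin\theta_0,\ 1/\theta_0\}$ establishes \eqref{eq:dd1}.

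I do not anticipate a genuine obstacle here: the entire content of Lemma~\ref{lem:t1} has already been absorbed into Theorem~\ref{lem:triangle}, and what remains is the bookkeeping of verifying that each embedded face is an admissible $n$-polygon and that the uniform constant survives taking the minimum over the finitely many facial degrees $n\le D$. The only point requiring a little care is that the face-wise triangulations glue into a global triangulation of $\R^2$ — but this is immediate because the diagonal triangulation of each $\phi(\sigma)$ introduces no new vertices and retains all original boundary edges, so adjacent faces continue to match along their shared $1$-skeleton edges.
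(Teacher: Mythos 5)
Your proposal is correct and follows essentially the same route the paper intends: apply Theorem~\ref{lem:triangle} to each embedded face $\phi(\sigma)$ (an $n$-polygon with $n=\deg(\sigma)\le D$, unit edges, and distinct vertices at mutual distance $\ge 1$), take the minimum of the angle bounds over the finitely many possible values $3\le n\le D$, and observe that the face-wise diagonal triangulations glue along the untouched boundary edges to give the global triangulation; the side-length bounds $1\le|v_i-v_j|\le C(D)$ then follow either from the perimeter bound $\deg(\sigma)\le D$ or from the law of sines combined with the angle bound. The paper leaves this verification implicit after constructing $\mathcal{T}$, and your write-up simply supplies the routine bookkeeping.
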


\section{Proof of the main theorem}\label{sec:proof}
In this section, we prove the main results of the paper.

Let $G=(V,E,F)$ be an infinite penny graph with bounded facial degree, and $\mathcal{T}=(V',E',F')$ be an associated triangulation of $\R^2$ for $G,$ constructed in Lemma~\ref{lem:t1}. For any function $f\in \R^V,$
let $E(f):\R^2\to \R,$ denoted by $\overline{f}$ for short, be the piecewise linear interpolation of $f$ with respect to the triangulation $\mathcal{T},$ i.e. for any $\tau\in F',$
$E(f)\Huge|_{\overline{\tau}}$ is the linear interpolation of $f$ on the vertices of $\tau$ to $\overline{\tau}.$ This yields
\begin{eqnarray*}&&E: \R^V\to C(\R^2), \\&&\quad \quad \ f\mapsto E(f)=\overline{f}.\end{eqnarray*}

\begin{prop} Let $G=(V,E,F)$ be an infinite penny graph with bounded facial degree, and $\mathcal{T}$ be an associated triangulation of $\R^2.$
Then there exists $C(D)$ such that for any $f\in \R^V$ and $\tau\in F',$
\begin{equation}\label{eq:triest}\sum_{x\in \partial \tau\cap V} f^2(x) \leq C\int_\tau \overline{f}^2(y)dy.\end{equation}
 \end{prop}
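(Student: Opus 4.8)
The statement is a local, single-triangle estimate, so the plan is to reduce it to a uniform statement about a fixed ``model'' triangle and then use a compactness argument. First I would observe that by Lemma~\ref{lem:t1}, every $\tau\in F'$ has all angles bounded below by $1/C(D)$ and all side-lengths in $[1,C(D)]$. Hence the set of all such triangles, up to Euclidean isometry, forms a \emph{compact} family in the space of triangles (parametrized, say, by the three side-lengths in the compact set determined by the angle and length bounds). On each such triangle $\tau$ with vertices $v_1,v_2,v_3$, the quantity
\begin{equation*}
\Phi(\tau,a,b,c):=\frac{a^2+b^2+c^2}{\displaystyle\int_\tau \overline{f}^2(y)\,dy},
\end{equation*}
where $\overline{f}$ is the affine function with $\overline{f}(v_i)$ equal to given values $(a,b,c)$, depends only on the shape of $\tau$ and on the point $(a,b,c)\in\mathbb{R}^3$, and is invariant under scaling $(a,b,c)\mapsto(\lambda a,\lambda b,\lambda c)$. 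So it suffices to bound $\Phi$ on the product of the compact family of normalized triangles with the unit sphere in $\mathbb{R}^3$.

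The key step is then to show that the denominator $\int_\tau \overline{f}^2$ is bounded below by a positive constant whenever $(a,b,c)$ lies on the unit sphere and $\tau$ ranges over the compact family. For a fixed nondegenerate triangle this is clear: $\overline{f}$ is affine and not identically zero (since $(a,b,c)\neq 0$), so $\int_\tau \overline{f}^2>0$; moreover an explicit formula gives
\begin{equation*}
\int_\tau \overline{f}^2(y)\,dy=\frac{\area(\tau)}{6}\bigl(a^2+b^2+c^2+ab+bc+ca\bigr),
\end{equation*}
and the quadratic form $a^2+b^2+c^2+ab+bc+ca=\tfrac12\bigl((a+b)^2+(b+c)^2+(c+a)^2\bigr)$ is positive definite, so it is bounded below by a positive constant on the unit sphere. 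Combined with $\area(\tau)\geq c(D)>0$ (which follows from the angle and length lower bounds, e.g. $\area(\tau)\geq\tfrac12\sin(1/C(D))$), this yields $\int_\tau\overline{f}^2\geq c'(D)>0$ on the unit sphere, while the numerator $a^2+b^2+c^2=1$ there. Dividing gives the desired inequality with $C=1/c'(D)$, and by homogeneity it holds for all $(a,b,c)$, hence for all $f\in\mathbb{R}^V$ and all $\tau\in F'$.

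Actually the explicit integral formula makes the compactness argument unnecessary: once one writes $\sum_{x\in\partial\tau\cap V}f^2(x)=a^2+b^2+c^2$ and uses the identity above together with $a^2+b^2+c^2\leq 2(a^2+b^2+c^2+ab+bc+ca)$ (from positive semidefiniteness of $(a+b)^2+(b+c)^2+(c+a)^2-0$, or directly) and $\area(\tau)\geq\tfrac12\sin(1/C(D))$, one gets
\begin{equation*}
\sum_{x\in\partial\tau\cap V}f^2(x)=a^2+b^2+c^2\leq \frac{12}{\area(\tau)}\int_\tau\overline{f}^2\leq \frac{24}{\sin(1/C(D))}\int_\tau\overline{f}^2,
\end{equation*}
which is \eqref{eq:triest}. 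The main obstacle, therefore, is not any deep argument but rather correctly deriving (or citing) the elementary integral identity for $\int_\tau\overline{f}^2$ over a triangle in terms of the vertex values, and checking that the relevant $3\times 3$ quadratic form is positive definite so that no cancellation can drive the right-hand side to zero; everything else is bookkeeping with the uniform geometry supplied by Lemma~\ref{lem:t1}. I would present the integral-identity route as the clean proof, mentioning the compactness viewpoint only as motivation.
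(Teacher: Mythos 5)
Your proof is correct and takes essentially the same route as the paper's: both rest on the uniform angle and side-length bounds of Lemma~\ref{lem:t1} together with the elementary computation of $\int_\tau\overline{f}^2$ for an affine function on a triangle. The paper first normalizes to the equilateral triangle via a bi-Lipschitz linear map $\eta$ with $C(D)^{-1}I\leq\eta^T\eta\leq C(D)I$ and then cites the ``standard calculation,'' whereas you spell that calculation out directly (the barycentric identity $\int_\tau\overline{f}^2=\frac{\area(\tau)}{6}(a^2+b^2+c^2+ab+bc+ca)$, positive-definiteness of the associated quadratic form, and the lower bound $\area(\tau)\geq\tfrac12\sin(1/C(D))$); the content of the two arguments is the same.
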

 \begin{proof} Let $\partial \tau\cap V=\{v_1,v_2,v_3\}.$ By \eqref{eq:dd1}, there is a linear isomorphism $\eta$ of $\R^2$ such that
 $$\eta(v_1)=(0,0),\eta(v_2)=(1,0),\eta(v_3)=(\frac12,\frac{\sqrt{3}}{2}),$$ with $\frac{1}{C(D)} I\leq \eta^T\eta\leq C(D) I,$ where $I$ is the identity matrix.
 Then the assertion reduces to the one for a regular triangle with vertices $\{(0,0),(1,0),(\frac12,\frac{\sqrt{3}}{2})\}.$ The standard calculation yields the result.
 \end{proof}

By the volume doubling property and the Poincar\'e inequality, Delmotte \cite{Delmotte97} proved the mean value inequality for harmonic functions on graphs.
\begin{lemma}[\cite{Delmotte97}]\label{MVIOG}
Let $G=(V,E,F)$ be an infinite penny graph with bounded facial degree. Then there exists $C_1(D)$ such that for any $r>0,p\in V,$ any harmonic function $f$ on $B_{r}(p),$ we
have
\begin{equation}\label{MVI}f^2(p)\leq\frac{C_1}{|B_{r}(p)|}\sum_{x\in B_{r}(p)}f^2(x).\end{equation}
\end{lemma}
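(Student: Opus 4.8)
The plan is to deduce \eqref{MVI} from the discrete Moser iteration, which is available precisely because, by the theorem of \cite{Hua20} recalled above, an infinite penny graph $G$ with bounded facial degree satisfies the volume doubling property \eqref{eq:1} and the Poincar\'e inequality \eqref{eq:poincare}, and moreover has $\deg(x)\le 6$ for every $x\in V$. These are the only structural features of $G$ that enter, and the argument is exactly Delmotte's, so one may alternatively just cite \cite{Delmotte97}. \textbf{Step 1 (reduction to subharmonic functions).} If $f$ is harmonic on $B_r(p)$, then for every $x\in B_r(p)$,
\[ \Delta(f^2)(x)=\sum_{y\sim x}\bigl(f(y)^2-f(x)^2\bigr)=\sum_{y\sim x}\bigl(f(y)-f(x)\bigr)^2+2f(x)\,\Delta f(x)=\sum_{y\sim x}\bigl(f(y)-f(x)\bigr)^2\ge 0, \]
so $u:=f^2\ge 0$ is subharmonic on $B_r(p)$. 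Hence it suffices to produce $C_1=C_1(D)$ with $u(p)\le \frac{C_1}{|B_r(p)|}\sum_{x\in B_r(p)}u(x)$ for every nonnegative subharmonic $u$.

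\textbf{Step 2 (Sobolev inequality and Caccioppoli estimate).} From \eqref{eq:1} and \eqref{eq:poincare} one obtains, in the standard way, a local Sobolev inequality on combinatorial balls with a dimensional exponent $\nu>2$ depending only on the doubling and Poincar\'e constants. Multiplying the pointwise inequality $\Delta u\ge 0$ by $\varphi^2 u^{q-1}$ and summing by parts, with $\varphi$ a cutoff equal to $1$ on $B_{\rho'}(p)$ and vanishing outside $B_\rho(p)$ for $r/2\le\rho'<\rho\le r$, yields a Caccioppoli-type energy bound for $u^{q/2}$; the bounded vertex degree is used here to control the boundary contributions over $\delta B_\rho(p)$.

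\textbf{Step 3 (iteration).} Combining the two estimates of Step 2 gives a reverse H\"older inequality of the form
\[ \Bigl(\tfrac{1}{|B_{\rho'}(p)|}\sum_{x\in B_{\rho'}(p)}u(x)^{\gamma q}\Bigr)^{1/(\gamma q)}\ \le\ \Bigl(C(D)\bigl(\tfrac{\rho}{\rho-\rho'}\bigr)^2\Bigr)^{1/q}\Bigl(\tfrac{1}{|B_{\rho}(p)|}\sum_{x\in B_{\rho}(p)}u(x)^{q}\Bigr)^{1/q}, \]
with $\gamma:=\nu/(\nu-2)>1$, where volume doubling lets one freely interchange $|B_{\rho'}(p)|$ and $|B_\rho(p)|$ since $\rho'/\rho\ge 1/2$. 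Iterating along radii $\rho_j\downarrow r/2$ and exponents $q_j=\gamma^j\to\infty$, and summing the geometric series formed by the resulting constants, one arrives at $\sup_{B_{r/2}(p)}u\le \frac{C(D)}{|B_r(p)|}\sum_{x\in B_r(p)}u(x)$. Since $p\in B_{r/2}(p)$, this is \eqref{MVI}; for $r$ below a fixed threshold one instead checks \eqref{MVI} directly using $\deg(x)\le 6$.

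\textbf{Main obstacle.} The delicate part is the discrete Moser iteration itself: choosing admissible cutoff functions adapted to combinatorial balls, estimating the boundary terms over $\delta B_\rho(p)$, and tracking that $\nu$ and all intermediate constants depend only on $D$. This is technical but routine, and is carried out in detail in \cite{Delmotte97}, which may simply be cited.
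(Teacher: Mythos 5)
The paper gives no proof of this lemma; it is stated as a citation of Delmotte, and your proposal ultimately does the same thing, so the two approaches coincide. Your sketch of the underlying argument (subharmonicity of $f^2$ via the discrete Bochner-type identity, a local Sobolev inequality from volume doubling plus Poincar\'e, a Caccioppoli estimate, and Moser iteration, with the trivial check for small $r$) is an accurate summary of how Delmotte's mean value inequality is obtained from the hypotheses that \cite{Hua20} verifies for penny graphs.
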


Now we can prove the mean value inequality for the extended function $\bar{f}$ on $\R^2$ for a harmonic function $f$ on $G.$
\begin{proof}[Proof of Theorem \ref{thm:mea}]
For any $p\in \R^2,$ there exists a triangular face $\tau_1\in F'$ such that $p\in\overline{\tau_1}.$ Then by the linear interpolation of $\bar f,$ there exists a vertex $q\in\partial \tau_1\cap V$ such that
\begin{eqnarray}\label{meanv1}\bar{f}^2(p)&\leq&f^2(q)\nonumber\\
&\leq&\frac{C}{|B_{r}(q)|}\sum_{y\in B_{r}(q)}f^2(y),
\end{eqnarray} where the last inequality follows from the mean value inequality (\ref{MVI}) for harmonic functions on the graph $G$.

Since $G$ is quasi-isometric to $\R^2,$ there exists $C$ such that
\begin{equation}\label{eq:quadratic}|B_r(x)|\geq Cr^2,\quad \forall x\in V, r>0.\end{equation}

Let $W_r:=\{\sigma\in F'\mid \overline{\sigma}\cap B_{r}(q)\neq\emptyset\}$ and $\overline{W_r}:=\bigcup_{\sigma\in W_r}\overline\sigma.$ For any $x\in \overline{W_r},$ there exists a face $\tau_2\in W_r$ such that $x\in\overline{\tau_2}.$ Set $z\in B_{r}(q)\cap \overline{\tau_2}.$ Then \begin{eqnarray*}|p-x|&\leq& |p-q|+|q-z|+|z-x|\leq C+d(q,z)+C\\
&\leq& r+2C\leq 2r,\quad \forall r\geq 2C.\end{eqnarray*}
This yields that for any $r\geq 2C,$ $$\overline{W_r}\subset  D_{2r}(p).$$
By (\ref{meanv1}) and \eqref{eq:quadratic}, we obtain for any $r\geq 2C,$
\begin{eqnarray*}\bar{f}^2(p)&\leq&\frac{C}{r^2}\sum_{y\in B_{r}(q)}f^2(y)\leq \frac{C}{r^2}\sum_{y\in \overline{W_r}\cap V}f^2(y)\\
&\leq&\frac{C}{r^2}\sum_{\sigma\in W_r}\sum_{y\in\partial \sigma\cap V}f^2(y)\\
&\leq&\frac{C}{r^2}\sum_{\sigma\in W_r}\int_{\sigma}\bar{f}^2\\
&\leq&\frac{C}{r^2}\int_{D_{2r}(p)}\bar{f}^2.
\end{eqnarray*} This proves the theorem.

\end{proof}

Since the extension map $E:\HH^k(G)\rightarrow C(\R^2)$ is injective, we estimate the dimension of $E(\HH^k(G)).$ Following the standard arguments by Colding-Minicozzi and Li, we will give the asymptotically sharp dimension estimate. The following lemmas follow verbatim as in \cite{Li97,Li12,ColdingMinicozzi97,ColdingMinicozzi98,Hua11,HuaJosttrans15}, hence we omit the proofs.

\begin{lemma}[Lemma~3.4 in \cite{Hua11}]\label{innerprod} For any finite dimensional subspace $K\subset E(\HH^k(G)),$ there exists a constant $R_0(K)$ depending on $K$ such that for any $R\geq R_0,$
\begin{equation}\label{innerp1}A_R(u,v)=\int_{D_R(p)}uv\end{equation} is an inner product on $K.$
\end{lemma}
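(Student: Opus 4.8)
\textbf{Proof proposal for Lemma~\ref{innerprod}.}
The plan is to verify the axioms of an inner product for $A_R(u,v)=\int_{D_R(p)}uv$ on the finite-dimensional space $K$, the only nontrivial point being positive-definiteness, i.e.\ that $A_R(u,u)=0$ forces $u\equiv 0$ on $K$ once $R$ is large enough. Bilinearity and symmetry are immediate. For a single nonzero $u=E(f)\in K$ with $f\in\HH^k(G)$, the mean value inequality \eqref{eq:MVI} of Theorem~\ref{thm:mea} shows that if $\int_{D_R(p)}u^2=0$ for some $R\ge R_1$ then $u(p')=0$ at every center $p'$, hence $u\equiv 0$; so for each fixed nonzero $u$ the quadratic form $A_R(u,u)$ is positive for all $R\ge R_1$. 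The subtlety is that $R_1$ from Theorem~\ref{thm:mea} does not depend on $u$ but we still need a single radius $R_0$ that works simultaneously for \emph{all} $u\in K$.

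Here is how I would extract such a uniform radius. Fix a basis $u_1,\dots,u_m$ of $K$ and consider the symmetric matrix $M(R)$ with entries $A_R(u_i,u_j)$; then $A_R$ is an inner product on $K$ iff $M(R)$ is positive definite. Each entry $A_R(u_i,u_j)=\int_{D_R(p)}u_iu_j$ is nondecreasing-in-absorption but, more usefully, for the \emph{diagonal} entries $A_R(u_i,u_i)=\int_{D_R(p)}u_i^2$ is a nondecreasing function of $R$, and by the argument above it is eventually strictly positive. The cleanest route is: because $K$ is finite-dimensional, the unit sphere $S=\{u\in K:\ \|u\|_*=1\}$ for any fixed auxiliary norm $\|\cdot\|_*$ on $K$ is compact. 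For each $u\in S$ the monotone functions $R\mapsto A_R(u,u)$ increase to a positive (possibly infinite) limit, so there is $R(u)\ge R_1$ with $A_{R(u)}(u,u)>0$; by continuity of $(u,R)\mapsto A_R(u,u)$ and compactness of $S$ we get a finite $R_0\ge R_1$ and $c>0$ with $A_{R_0}(u,u)\ge c>0$ for all $u\in S$, and hence (by scaling and monotonicity in $R$) $A_R(u,u)\ge c\|u\|_*^2>0$ for every nonzero $u\in K$ and every $R\ge R_0$. This gives positive-definiteness of $M(R)$ for all $R\ge R_0$, which is exactly the claim.

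The main obstacle is precisely this passage from ``for each $u$ there is a good radius'' to ``one radius is good for all $u$''; it is handled by the compactness of the unit sphere in the finite-dimensional space $K$ together with the monotonicity of $R\mapsto A_R(u,u)$ and the joint continuity of $A_R(u,u)$ in $(u,R)$ (the latter because $u\mapsto E(f)$ is linear and the integrands are locally bounded). The only input from the geometry of penny graphs is the mean value inequality \eqref{eq:MVI}, used solely to guarantee that a nonzero extended harmonic function cannot vanish identically on any disk $D_R(p)$ with $R\ge R_1$, so that the limiting value $\lim_{R\to\infty}A_R(u,u)$ is strictly positive. I would present this as a short, self-contained argument and then remark, as the paper does, that the remaining Colding--Minicozzi/Li machinery (counting dimensions via the growth of $A_R$) goes through verbatim.
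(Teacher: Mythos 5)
Your underlying idea --- compactness of the unit sphere in the finite-dimensional space $K$, combined with the fact that for each nonzero $u$ the increasing function $R\mapsto A_R(u,u)$ is eventually positive --- is the right one, and it is essentially the standard Colding--Minicozzi/Li argument that the cited Lemma~3.4 of \cite{Hua11} carries out. But the first half of your write-up contains a claim that is both false and, if it were true, would make the compactness argument unnecessary, so the proposal as written is internally inconsistent.

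Specifically, you assert that the mean value inequality \eqref{eq:MVI} shows: if $\int_{D_R(p)}u^2=0$ for \emph{some} $R\ge R_1$ then $u\equiv 0$ on $\R^2$, and hence $A_R(u,u)>0$ for every nonzero $u$ and every $R\ge R_1$. This does not follow. The inequality \eqref{eq:MVI} applied at a point $p'$ bounds $u^2(p')$ by $\frac{C_1}{R^2}\int_{D_R(p')}u^2$; to conclude $u(p')=0$ you would need $u$ to vanish on $D_R(p')$, whereas you only know it vanishes on $D_R(p)$. There is no propagation mechanism: discrete harmonic functions are not real-analytic, and a nonzero $f\in\HH^k(G)$ can certainly vanish on a fixed finite set of vertices, so $E(f)$ can vanish on a disk whose radius is independent of $f$. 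This is exactly why the lemma allows $R_0$ to depend on the subspace $K$, rather than being a universal constant $R_1(D)$ as your first claim would imply. You notice the tension yourself (``The subtlety is that $R_1$\ldots does not depend on $u$ but we still need a single radius\ldots''), but if your first claim were correct there would be no subtlety at all.

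The correct and simpler route is to drop the mean value inequality entirely for this lemma: $u=E(f)$ is continuous and nonzero whenever $f\ne 0$ (since $u$ agrees with $f$ on $V$), hence nonzero on some open set, hence $A_R(u,u)>0$ once $D_R(p)$ meets that set. That gives the ``eventually positive'' fact with no geometry input. Then one runs the compactness argument. (The mean value inequality enters only later, in Lemma~\ref{PNGL2}.) One further small point: your step ``by continuity of $(u,R)\mapsto A_R(u,u)$ and compactness of $S$ we get $R_0$ and $c>0$'' is not a one-line consequence of joint continuity, because the pointwise-in-$u$ radius $R(u)$ is not a priori bounded; you need either a finite subcover argument (for each $u$ a neighborhood $U_u$ and a radius $R(u)$ with $A_{R(u)}>0$ on $U_u$, then take the maximum over a finite subcover and use monotonicity in $R$) or the observation that $u\mapsto\lim_{R\to\infty}A_R(u,u)$ is lower semicontinuous and strictly positive on the compact sphere $S$, hence bounded below. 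Either way the idea is sound; it just needs to be said.
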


\begin{lemma}[Lemma~{28.3} in \cite{Li12}]\label{PNGL1} Let $K$ be an $m$-dimensional subspace of $E(\HH^k(G)).$ Given $\beta>1,\delta>0,$ for any $R_1\geq R_0(K)$ there exists $R>R_1$ such that if $\{u_i\}_{i=1}^m$ is an orthonormal basis of $K$ with respect to the inner product $A_{\beta R},$ then $$\sum_{i=1}^{m} A_R(u_i,u_i)\geq m\beta^{-(2d+2+\delta)}.$$
\end{lemma}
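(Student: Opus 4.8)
The plan is to run the determinant-monotonicity argument of Colding--Minicozzi and Li (as in \cite{Li12}); the only point where the geometry of the penny graph enters is a polynomial upper bound for the quantities $A_R(v,v)$, which I will extract from Theorem~\ref{thm:quasi} and Lemma~\ref{lem:t1}. Fix once and for all a basis $v_1,\dots,v_m$ of $K$, independent of $R$, and for $R\ge R_0(K)$ let $M(R)$ be the Gram matrix $M(R)_{ij}=A_R(v_i,v_j)=\int_{D_R(p)}\overline{v_i}\,\overline{v_j}$. By Lemma~\ref{innerprod}, $A_R$ is an inner product on $K$ for every $R\ge R_0$, so $M(R)$ is positive definite and $\det M(R)>0$.

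Next I would record the two linear-algebra ingredients. Since $D_R(p)\subset D_{R'}(p)$ for $R\le R'$ and $\overline{v}^{\,2}\ge 0$, the quadratic forms satisfy $A_R\le A_{R'}$ on $K$, i.e.\ $M(R')-M(R)$ is positive semidefinite; since $\det(A+B)\ge\det A$ whenever $A$ is positive definite and $B$ positive semidefinite, the function $R\mapsto\det M(R)$ is non-decreasing on $[R_0,\infty)$. Moreover, if $R\ge R_1\ge R_0$ and $\{u_i\}_{i=1}^m$ is any $A_{\beta R}$-orthonormal basis of $K$, then writing $u_i=\sum_j c_{ij}v_j$ and $C=(c_{ij})$ one has $CM(\beta R)C^{T}=I$, so $\sum_{i=1}^m A_R(u_i,u_i)=\mathrm{tr}\bigl(CM(R)C^{T}\bigr)=\mathrm{tr}\bigl(M(\beta R)^{-1}M(R)\bigr)$, which is independent of the chosen $A_{\beta R}$-orthonormal basis; its eigenvalues $\lambda_1,\dots,\lambda_m$ are positive, and the arithmetic--geometric mean inequality gives
\begin{equation*}
\frac{1}{m}\sum_{i=1}^m A_R(u_i,u_i)=\frac{1}{m}\sum_{i=1}^m\lambda_i\ \ge\ \Bigl(\prod_{i=1}^m\lambda_i\Bigr)^{1/m}=\Bigl(\frac{\det M(R)}{\det M(\beta R)}\Bigr)^{1/m}.
\end{equation*}

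The penny-graph input is an upper bound on $\det M(R)$. For $v\in E(\HH^k(G))$ write $v=\overline{w}$ with $w\in\HH^k(G)$, so $|w(x)|\le C_w(1+d(x,x_0))^k$. Since $\overline{w}(y)$ is a convex combination of the values of $w$ at the vertices of the triangle $\tau$ of $\mathcal T$ containing $y$, since $\tau$ has diameter at most $C(D)$ (Lemma~\ref{lem:t1}), and since $d(x,x_0)\le 2D\,|\phi(x)-\phi(x_0)|$ (Theorem~\ref{thm:quasi}), one gets $|\overline{w}(y)|\le C_w'(1+|y|)^k$ for all $y\in\R^2$; hence $A_R(v,v)=\int_{D_R(p)}\overline{w}^{\,2}\le C_w''\,R^{2k+2}$ for all $R\ge R_0$, after enlarging $R_0$ if necessary. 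Applying this to $v_1,\dots,v_m$ and using Hadamard's inequality, $\det M(R)\le\prod_{i=1}^m A_R(v_i,v_i)\le C_1\,R^{m(2k+2)}$ for all $R\ge R_0$, with $C_1$ depending on $K$.

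Finally I would argue by contradiction. Suppose that for every $R>R_1$ one has $\sum_{i=1}^m A_R(u_i,u_i)<m\beta^{-(2k+2+\delta)}$ (the exponent $2k+2+\delta$ is the $2d+2+\delta$ of the statement, $d=k$ being the polynomial growth degree and the ``$2$'' the dimension of $\R^2$). Combined with the AM--GM estimate above this gives $\det M(\beta R)>\beta^{\,m(2k+2+\delta)}\det M(R)$ for all $R>R_1$; iterating along $R=\beta^{j}R_1$ yields $\det M(R)\ge c_0\,R^{\,m(2k+2+\delta)}$ along this sequence for some constant $c_0=c_0(K,R_1)>0$, which contradicts $\det M(R)\le C_1R^{m(2k+2)}$ once $\beta^{j}R_1$ is large enough, since $\delta>0$. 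Hence some $R>R_1$ satisfies the desired inequality. The only genuinely non-formal step here is the polynomial upper bound on $A_R(v,v)$, where the quasi-isometry and the bounded geometry of the associated triangulation are used; everything else is the standard mechanism (Loewner-monotone determinant plus AM--GM on generalized eigenvalues), and since the statement is quoted verbatim from \cite{Li12}, I anticipate no real obstacle.
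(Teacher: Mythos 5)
Your reconstruction is correct and is precisely the standard Colding--Minicozzi/Li determinant-monotonicity argument that the paper invokes by citing Lemma~28.3 of \cite{Li12} without proof; the AM--GM step via $\mathrm{tr}(M(\beta R)^{-1}M(R))$, the Hadamard bound $\det M(R)\le\prod_i A_R(v_i,v_i)$, and the iteration/contradiction are all exactly as in the reference. Your only genuinely new input---the polynomial bound $A_R(v,v)\le CR^{2k+2}$ obtained from the growth of $w\in\HH^k(G)$, the bounded diameter of the triangles of $\mathcal{T}$ (Lemma~\ref{lem:t1}), and the bi-Lipschitz comparison of $d$ with the Euclidean metric (Theorem~\ref{thm:quasi})---is argued correctly, and your identification $d=k$ with the additional ``$+2$'' coming from the Euclidean area growth is the right reading of the exponent in the statement.
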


By the mean value inequality for extended functions, Theorem~\ref{thm:mea}, we prove the following lemma.
\begin{lemma}[Lemma~{28.4} in \cite{Li12}, Lemma~{28.4} in \cite{HuaJosttrans15}]\label{PNGL2} Let $G=(V,E,F)$ be an infinite penny graph with bounded facial degree, $K$ be an $m$-dimensional subspace of $E(\HH^k(G)).$ Let $\epsilon\in(0,\frac{1}{2}).$ Then there exists constants $R_2(D,\epsilon)$ and
$C(D)$ such that for any basis of $K,$ $\{u_i\}_{i=1}^m,$ and $R\geq R_2,$ we have
$$\sum_{i=1}^mA_R(u_i,u_i)\leq C\epsilon^{-1}\sup_{u\in <A,U>}\int_{D_{(1+\epsilon)R}(p)}u^2,$$ where $<A,U>:=\{w=\sum_{i=1}^ma_iu_i:\sum_{i=1}^ma_i^2=1\}.$
\end{lemma}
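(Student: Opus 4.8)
The plan is to mimic the Colding--Minicozzi covering argument exactly as in \cite[Lemma~28.4]{Li12}, the only new ingredient being the mean value inequality for extended functions, Theorem~\ref{thm:mea}, which replaces the usual mean value inequality on a manifold. First I would fix $\epsilon\in(0,\tfrac12)$, choose a maximal $\tfrac{\epsilon R}{2}$-separated set of points $\{p_1,\dots,p_N\}$ inside $D_R(p)$, and note that the balls $D_{\epsilon R/4}(p_j)$ are pairwise disjoint and contained in $D_{(1+\epsilon)R}(p)$, while the balls $D_{\epsilon R/2}(p_j)$ cover $D_R(p)$. Comparing volumes of these Euclidean disks in $\R^2$ gives $N\leq C\epsilon^{-2}$, a bound independent of $R$; this is where the homogeneous volume growth of $\R^2$ enters, and it is much cleaner here than in the manifold case.

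Next, for each $j$ let $K_j$ be the restriction to $D_{\epsilon R/4}(p_j)$ of functions in $K$, a linear map $K\to C(D_{\epsilon R/4}(p_j))$; its image has dimension at most $m_j:=\dim K_j$, and there is an orthonormal basis (with respect to $A_R$) adapted to the filtration so that at most $m_j$ basis elements ``see'' the $j$-th disk in the sense needed below. More precisely, following Li, one chooses for each $j$ an $A_R$-orthonormal basis $\{u_i\}$ of $K$ such that $u_{m_j+1},\dots,u_m$ all vanish on $D_{\epsilon R/4}(p_j)$ when restricted appropriately; summing $\sum_i A_R(u_i,u_i)$ and localizing the integral $A_R(u_i,u_i)=\int_{D_R(p)}u_i^2\leq\sum_j\int_{D_{\epsilon R/2}(p_j)}u_i^2$ reduces everything to estimating $\int_{D_{\epsilon R/2}(p_j)}u_i^2$ for $i\leq m_j$. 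For each such disk, apply Theorem~\ref{thm:mea}: for every $y\in D_{\epsilon R/2}(p_j)$, $\overline{f}^2(y)\leq \frac{C_1}{(\epsilon R/4)^2}\int_{D_{\epsilon R/4}(y)}\overline{f}^2$, and since $D_{\epsilon R/4}(y)\subset D_{\epsilon R}(p_j)\subset D_{(1+\epsilon)R}(p)$, integrating over $y\in D_{\epsilon R/2}(p_j)$ and using $\mathrm{vol}(D_{\epsilon R/2}(p_j))=C(\epsilon R)^2$ yields $\int_{D_{\epsilon R/2}(p_j)}\overline{f}^2\leq C\int_{D_{(1+\epsilon)R}(p)}\overline{f}^2$ with $C$ independent of $R$ and $\epsilon$ — the factors $(\epsilon R)^2$ cancel. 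Here $R_2(D,\epsilon)$ is chosen so that $\epsilon R/4\geq R_1(D)$, the threshold in Theorem~\ref{thm:mea}.

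Putting the pieces together: $\sum_{i=1}^m A_R(u_i,u_i)\leq\sum_{j=1}^N\sum_{i=1}^{m_j}\int_{D_{\epsilon R/2}(p_j)}u_i^2\leq\sum_{j=1}^N m_j\cdot C\sup_{w\in\langle A,U\rangle}\int_{D_{(1+\epsilon)R}(p)}w^2$, and the combinatorial heart of the Colding--Minicozzi lemma — that $\sum_j m_j\leq C N\log(\text{something})$, or in the cleaner version $\sum_j m_j\leq CN$ after passing to a suitable chain of subspaces — together with $N\leq C\epsilon^{-2}$ would give a bound of the form $C\epsilon^{-2}$ rather than the claimed $C\epsilon^{-1}$; the sharper $\epsilon^{-1}$ comes from the standard refinement where one only needs the $p_j$ to be $\epsilon R$-separated in one suitably chosen direction, or equivalently from a more careful packing estimate, exactly as carried out in \cite{Li12}. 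The main obstacle is therefore not analytic but bookkeeping: reproducing faithfully the subspace-filtration argument that controls $\sum_j m_j$ and extracts the optimal power of $\epsilon$; since this is verbatim as in \cite{Li12} and \cite{HuaJosttrans15}, with Theorem~\ref{thm:mea} substituted for the manifold mean value inequality and Euclidean volume growth substituted for the doubling estimate, I would state the lemma and refer to those sources for the details, noting only the two substitutions explicitly.
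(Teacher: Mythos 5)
Your framing — substitute Theorem~\ref{thm:mea} for the manifold mean value inequality, substitute the quadratic volume growth of $\R^2$ for doubling, and then run the Colding--Minicozzi/Li machinery — is the right one, and the paper itself gives nothing beyond the citation to \cite{Li12,HuaJosttrans15}. But the covering argument you sketch does not, and cannot, produce the power $\epsilon^{-1}$. A ball of radius $R$ in $\R^2$ needs $N\sim\epsilon^{-2}$ balls of radius $\epsilon R$ to cover it, and the subspace-filtration bookkeeping cannot beat $N$: since (by injectivity of $E$ plus unique continuation of harmonic functions on $G$) the restriction of $K$ to an open disk is again $m$-dimensional, the ``$u_{m_j+1},\dots,u_m$ vanish on $D_{\epsilon R/4}(p_j)$'' step is already problematic, and in any case $\sum_j m_j\geq N\sim\epsilon^{-2}$. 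The hand-waved fix (``$\epsilon R$-separated in one direction'', ``a more careful packing estimate'') is not what happens in Li's Lemma~28.4. And this is not cosmetic: with $\epsilon^{-2}$ in place of $\epsilon^{-1}$, the proof of Theorem~\ref{thm:main1} only returns $m\leq Ck^2$, i.e.\ Theorem~\ref{thm:firstres}, not the linear bound that is the whole point of the paper.

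The actual argument does not cover at all. Two things are missing from your sketch. First, the trace identity: for \emph{any} basis $\{u_i\}$ and any $x$, Cauchy--Schwarz gives
\begin{equation*}
\sum_{i=1}^m u_i^2(x)=\sup_{\|a\|=1}\Bigl(\sum_i a_i u_i(x)\Bigr)^2=\sup_{u\in\langle A,U\rangle}u^2(x),
\end{equation*}
so that $\sum_i A_R(u_i,u_i)=\int_{D_R(p)}\sup_{u\in\langle A,U\rangle}u^2(x)\,dx$ with no orthonormalization or filtration needed. Second, one applies Theorem~\ref{thm:mea} at each $x\in D_R(p)$ with the \emph{variable} radius $r(x):=(1+\epsilon)R-|x-p|\geq\epsilon R$, which is $\geq R_1(D)$ once $R\geq R_2:=R_1/\epsilon$ and for which $D_{r(x)}(x)\subset D_{(1+\epsilon)R}(p)$; this gives $\sup_{u\in\langle A,U\rangle}u^2(x)\leq C_1 r(x)^{-2}\sup_{w\in\langle A,U\rangle}\int_{D_{(1+\epsilon)R}(p)}w^2$. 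Integrating in polar coordinates,
\begin{equation*}
\int_{D_R(p)}\frac{dx}{r(x)^2}=2\pi\int_0^R\frac{s\,ds}{\bigl((1+\epsilon)R-s\bigr)^2}=2\pi\Bigl(\frac{1}{\epsilon}-\log\frac{1+\epsilon}{\epsilon}\Bigr)\leq\frac{2\pi}{\epsilon},
\end{equation*}
which is exactly where the $\epsilon^{-1}$ (and, in $\R^n$, the $\epsilon^{1-n}$) comes from: the mean value radius grows from $\epsilon R$ at $|x-p|=R$ to $(1+\epsilon)R$ at $x=p$, and the degenerate contribution near $\partial D_R(p)$ integrates to $\epsilon^{-1}$, not $\epsilon^{-2}$. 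So the discrepancy you flagged is not a refinement of your covering; it reflects a different and simpler mechanism.
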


Now we are ready to prove Theorem~\ref{thm:main1}.
\begin{proof}[Proof of Theorem~\ref{thm:main1}] For any $m-$dimensional subspace $K\subset E(\HH^k(G)),$ we set $\beta=1+\epsilon$ for $\epsilon\in(0,\frac12).$ By Lemma~\ref{innerprod}, $A_R$ is an inner product on $K$ for $R\geq R_0(K).$ By Lemma \ref{PNGL1}, there exist infinitely many $R\geq R_0(K)$ such that for any orthonormal basis $\{u_i\}_{i=1}^m$ of $K$ with respect to $A_{(1+\epsilon) R},$ we have
$$\sum_{i=1}^m A_R(u_i,u_i)\geq m(1+\epsilon)^{-(2k+2+\delta)}.$$
Lemma \ref{PNGL2} implies that $$\sum_{i=1}^m A_R(u_i,u_i)\leq C(D)\epsilon^{-1}.$$ Setting $\epsilon=\frac{1}{2k},$ and letting $\delta\rightarrow 0,$ we obtain
\begin{equation*}\label{PF11}m\leq 2Ck\left(1+\frac{1}{2k}\right)^{2k+2+\delta}\leq Ck.\end{equation*} Since the above estimate holds for any subspace $K$ of $E(\HH^k(G)),$
$$\dim \HH^k(G)=\dim E(\HH^k(G))\leq Ck.$$ This proves the result.

\end{proof}



\begin{definition} We say that $u:V\times (-\infty,0]\to\R$ is an ancient solution to the heat equation if $u(x,\cdot)\in C^1((-\infty,0])$ for any $x\in V$ and
$$\partial_t u(x,t)=\Delta u(x,t),\quad \forall x\in V, t\leq 0.$$
\end{definition}
Fix $x_0\in V.$ We denote by
$$\mathcal{P}^k(G):=\{\mathrm{ancient\ solution}\ u: |u(x,t)|\leq C(1+d(x,x_0)+\sqrt{|t|})^k, \forall x\in V,t\leq 0\}$$ the space of ancient solutions of polynomial growth with growth rate at most $k.$
Note that the above space doesn't depend on the choice of $x_0.$

Ancient solutions of polynomial growth were studied by many authors in Riemannian geometry, \cite{CalleMZ06,Callethesis,LinZhang17,ColdingM19}. The following result is a discrete analog of Colding-Minicozzi's theorem \cite{ColdingM19}.
\begin{theorem}[\cite{Huaancient19}] Let $G=(V,E)$ be a graph with bounded vertex degree, which has polynomial volume growth, i.e. for some $x\in V, C>0,a>0,$ $$|B_R(x)|\leq CR^a,\ \forall R\geq 1.$$ Then for any $k\in \N,$
$$\dim\mathcal{P}^{2k}(G)\leq (k+1)\dim \HH^{2k}(G).$$
\end{theorem}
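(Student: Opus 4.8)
The strategy is a filtration of $\mathcal{P}^{2k}(G)$ by the time-derivative operator. The two structural facts I would establish are: \emph{(a)} differentiation in $t$ is a linear map $\partial_t:\mathcal{P}^{2k}(G)\to\mathcal{P}^{2k-2}(G)$, i.e. it lowers the parabolic growth exponent by $2$; and \emph{(b)} the kernel of $\partial_t$ restricted to $\mathcal{P}^{2k}(G)$ is precisely $\mathcal{H}^{2k}(G)$, the time-independent ancient solutions. Granting \emph{(a)}, \emph{(b)}, and the Liouville fact $\mathcal{P}^{-2}(G)=\{0\}$, rank--nullity gives $\dim\mathcal{P}^{2k}(G)\le\dim\mathcal{H}^{2k}(G)+\dim\mathcal{P}^{2k-2}(G)$, and iterating down to $\mathcal{P}^{-2}(G)=\{0\}$ yields $\dim\mathcal{P}^{2k}(G)\le\sum_{j=0}^{k}\dim\mathcal{H}^{2j}(G)$. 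Since $1+d(\cdot,x_0)\ge 1$ gives $\mathcal{H}^{2j}(G)\subseteq\mathcal{H}^{2k}(G)$ for $0\le j\le k$, each summand is at most $\dim\mathcal{H}^{2k}(G)$, so $\dim\mathcal{P}^{2k}(G)\le(k+1)\dim\mathcal{H}^{2k}(G)$. (One may assume $\dim\mathcal{H}^{2k}(G)<\infty$, otherwise there is nothing to prove.)

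The easy ingredients are \emph{(b)} and the Liouville fact. For \emph{(b)}: if $u\in\mathcal{P}^{2k}(G)$ has $\partial_t u\equiv 0$, then $u(x,t)=a(x)$ is constant in $t$, the heat equation forces $\Delta a\equiv 0$, and evaluating the growth bound at $t=0$ gives $|a(x)|\le C(1+d(x,x_0))^{2k}$, so $a\in\mathcal{H}^{2k}(G)$; conversely any $a\in\mathcal{H}^{2k}(G)$, viewed as time-independent, lies in this kernel. For the Liouville fact: if $u\in\mathcal{P}^{-2}(G)$ then $\|u(\cdot,-s)\|_{\ell^\infty(V)}\le C(1+\sqrt{s})^{-2}\to 0$ as $s\to+\infty$; since $G$ has bounded vertex degree the heat semigroup $e^{t\Delta}$ is an $\ell^\infty$-contraction, and by uniqueness of polynomial-growth solutions of the heat equation $u(\cdot,0)=e^{s\Delta}u(\cdot,-s)$, whence $\|u(\cdot,0)\|_{\ell^\infty(V)}\le\|u(\cdot,-s)\|_{\ell^\infty(V)}\to 0$ and $u\equiv 0$.

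The crux is \emph{(a)}. Since $\partial_t u=\Delta u$ and $\Delta$ is a bounded local operator, the naive bound only preserves the exponent $2k$; the genuine gain of two orders is a parabolic regularity statement, and this is where the geometry of $G$ enters. I would prove it by a discrete parabolic mean value inequality for ancient solutions, in the spirit of Theorem~\ref{thm:mea}, applied to the ancient solution $\partial_t u$, combined with a Caccioppoli (reverse Poincar\'e) estimate obtained by summation by parts in the heat equation: for $r\ge 1$,
\[
(\partial_t u)^2(x,t_0)\ \le\ \frac{C}{r^2\,|B_r(x)|}\int_{t_0-r^2}^{t_0}\sum_{y\in B_r(x)}(\partial_t u)^2(y,t)\,dt\ \le\ \frac{C}{r^6\,|B_r(x)|}\int_{t_0-4r^2}^{t_0}\sum_{y\in B_{2r}(x)}u^2(y,t)\,dt,
\]
each summation by parts contributing a factor $r^{-2}$; choosing $r\asymp 1+d(x,x_0)+\sqrt{|t_0|}$ and inserting the polynomial growth of $u$ and the volume growth of $G$ produces the bound $C(1+d(x,x_0)+\sqrt{|t_0|})^{2k-2}$. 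Equivalently, one may argue through heat-kernel estimates, writing $\partial_t u(x,t_0)=\sum_{y\in V}\partial_s p_s(x,y)\,u(y,t_0-s)$ from $u(\cdot,t_0)=e^{s\Delta}u(\cdot,t_0-s)$ and $\partial_s p_s(x,y)=\Delta_x p_s(x,y)$, using the Gaussian upper bound together with $|\partial_s p_s(x,y)|\le\frac{C}{s}p_{Cs}(x,y)$, then optimizing in $s$; or through a parabolic frequency monotonicity in the spirit of Colding--Minicozzi \cite{ColdingM19}. Establishing this parabolic estimate under the volume-growth hypothesis is the main obstacle; everything else is bookkeeping.

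As a byproduct, iterating \emph{(a)} and applying the Liouville fact shows $\partial_t^{k+1}u=\Delta^{k+1}u\equiv 0$, so each $u\in\mathcal{P}^{2k}(G)$ is a polynomial of degree $\le k$ in $t$, namely $u(x,t)=\sum_{j=0}^{k}\frac{t^j}{j!}\Delta^j a(x)$ with $a:=u(\cdot,0)$ satisfying $\Delta^{k+1}a\equiv 0$. This gives an equivalent route: $u\mapsto a$ embeds $\mathcal{P}^{2k}(G)$ into the space of order-$(k+1)$ polyharmonic functions of polynomial growth $2k$, which (using that $\Delta$ does not increase growth on a bounded-degree graph) is filtered by $\Delta$ with kernel $\mathcal{H}^{2k}(G)$, yielding $\dim\mathcal{P}^{2k}(G)\le(k+1)\dim\mathcal{H}^{2k}(G)$ as before.
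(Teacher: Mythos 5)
Your overall skeleton is the right one: filter $\mathcal{P}^{2k}(G)$ by $\partial_t$, identify $\ker\partial_t$ with $\mathcal{H}^{2k}(G)$, use a Liouville statement at the bottom, and bound each graded piece. Items \emph{(b)} and the Liouville fact are fine, and you correctly isolate \emph{(a)} as the crux. The gap is in how you propose to prove \emph{(a)}. Your primary route runs through a parabolic mean value inequality for the ancient solution $\partial_t u$ (and, in the alternative, Gaussian heat kernel upper bounds with $|\partial_s p_s|\le Cs^{-1}p_{Cs}$). On graphs these are not consequences of bounded vertex degree plus polynomial volume growth: scale-invariant parabolic mean value and Gaussian two-sided estimates are equivalent (Delmotte, Grigor'yan--Saloff-Coste) to volume doubling together with the Poincar\'e inequality, which the theorem does not assume. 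Your computation with $r\asymp 1+d(x,x_0)+\sqrt{|t_0|}$ is correct \emph{given} the MVI and doubling, so it would serve for penny graphs specifically (where the present paper has established VD and PI), but it does not prove the theorem in the generality stated. Moreover, even the pure Caccioppoli iteration you suggest, without the MVI, does not close: each step gains $r^{-2}$ in the $L^2$ average, but the volume factor $|B_r|\asymp r^a$ eats that gain pointwise once $a\ge 2$.

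The argument that [Huaancient19] actually follows, adapting \cite{ColdingM19}, avoids parabolic regularity entirely and works under polynomial volume growth alone. One shows directly that $\partial_t^{k+1}u\equiv 0$ for $u\in\mathcal{P}^{2k}(G)$, by considering a weighted quantity of the form $J(t)=\sum_{x}u^2(x,t)\,\rho(x)$ with a suitable spatial weight that makes $J$ finite (polynomial volume growth is exactly what is needed here). Formally $J'\le 0$, $J''=4\sum(\partial_t u)^2\rho\ge 0$, and more generally $(-1)^jJ^{(j)}\ge 0$ up to controllable error terms, so $s\mapsto J(-s)$ is (essentially) absolutely monotone; since it is also $O(s^{2k})$, it must be a polynomial of degree $\le 2k$, forcing $J^{(2k+2)}\equiv 0$ and hence $\partial_t^{k+1}u\equiv 0$. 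Once this is in hand, your own filtration collapses to the simpler one: $\partial_t=\Delta$ maps $\mathcal{P}^{2k}(G)$ into itself (no growth gain needed, since $\Delta$ is a bounded local operator), $\partial_t^{k+1}$ annihilates $\mathcal{P}^{2k}(G)$, each quotient $\ker\partial_t^{j+1}/\ker\partial_t^j$ injects into $\ker\partial_t=\mathcal{H}^{2k}(G)$, and the bound $(k+1)\dim\mathcal{H}^{2k}(G)$ follows. Your ``byproduct'' paragraph at the end is essentially this, and your passing reference to frequency monotonicity in the spirit of Colding--Minicozzi is pointing in the right direction; but the detailed route you actually lay out (MVI plus Caccioppoli, or Gaussian kernel bounds) would need hypotheses the theorem does not grant.
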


This yields the corollary of Theorem~\ref{thm:main1}.
\co Let $G$ be an infinite penny graph with bounded facial degree.
Then for any $k\geq 1,$$$\dim\mathcal{P}^{k}(G)\leq C(D) k^2.$$
\cod


\bibliography{py2}
\bibliographystyle{alpha}

\bigskip
\bigskip

\bigskip

\end{document}